\newcommand{\virgolette}[1]{``#1''}
\newtheorem{teorema}{Theorem}[section]
\newtheorem{coro}[teorema]{Corollary}
\newtheorem{lemma}[teorema]{Lemma}
\newtheorem{prop}[teorema]{Proposition}
\newtheorem{osss}[teorema]{Remark}
\newtheorem*{oss}{Remark}
\theoremstyle{definition}
\newtheorem{defi}[teorema]{Definition}
\theoremstyle{remark}
\newtheorem*{assumptionI}{\bf Assumptions I}
\newtheorem*{assumptionII}{\bf Assumptions II}
\newtheorem*{assumptionA}{\bf Assumptions A}
\newcommand{\iii}{{\, \vert\kern-0.25ex\vert\kern-0.25ex\vert\, }}
\newcommand{\ffi}{\varphi}
\newcommand{\LL}{\mathcal{L}}
\newcommand{\MM}{\mathcal{M}}
\newcommand{\NN}{\mathcal{N}}
\newcommand{\KK}{\mathcal{K}}
\newcommand{\Di}{\mathcal{D}}
\newcommand{\N}{\mathbb{N}}
\newcommand{\R}{\mathbb{R}}
\newcommand{\Q}{\mathbb{Q}}
\newcommand{\C}{\mathbb{C}}
\newcommand{\Hi}{\mathscr{H}}
\newcommand{\Gi}{\mathscr{G}}
\newcommand{\Si}{\mathscr{S}}
\newcommand{\dd}{\partial}
\newcommand{\ra}{\rangle}
\newcommand{\la}{\langle}
\newcommand{\G}{\Gamma}
\newcommand{\Ti}{\mathcal{T}}
\begin{document}

\date{}
\title[Bilinear control]{Controllability of periodic bilinear quantum systems on infinite graphs }

\author{Ka\"{\i}s Ammari}
\address{UR Analysis and Control of PDEs, UR 13ES64, Department of Mathematics, Faculty of Sciences of Monastir, University of Monastir, Tunisia}
\email{kais.ammari@fsm.rnu.tn} 

\author{Alessandro Duca}
\address{Institut Fourier, Université Grenoble Alpes, 100 Rue des Mathématiques, 38610 Gières, France} 
\email{alessandro.duca@unito.it}

\begin{abstract}
In this work, we study the controllability of the bilinear Schr\"odinger equation on infinite graphs for periodic 
quantum states. We consider the bilinear Schr\"odinger equation \eqref{mainx1} $i\dd_t\psi=-\Delta\psi+u(t)B\psi$ 
in the Hilbert space $L^2_p$ composed by functions defined on an infinite graph $\Gi$ verifying periodic boundary 
conditions on the infinite edges. The Laplacian $-\Delta$ is equipped with specific boundary conditions, $B$ is a 
bounded symmetric operator and $u\in L^2((0,T),\R)$ with $T>0$. We present the well-posedness of the \eqref{mainx1} 
in suitable subspaces of $D(|\Delta|^{3/2})$ . In such spaces, we study the global exact controllability and we 
provide examples involving tadpole graphs and star graphs with infinite spokes.  
\end{abstract}

\subjclass[2010]{35Q40, 93B05, 93C05}
\keywords{Bilinear control, infinite graph}

\maketitle

\section{Introduction}\label{intro}

Graph type structures (Figure 
\ref{fig:1}) have been widely studied for the modeling of 
phenomena arising in science, social 
sciences 
and
engineering. Among the many applications to quantum mechanics, they were used to study 
the dynamics of free
electrons in organic molecules starting from the seminal work \cite{198}, the 
superconductivity in granular and 
artificial materials \cite{121212}, acoustic and electromagnetic wave-guides networks in \cite{112, 181}, etc.

\begin{figure}[H]
	\centering
	\includegraphics[width=\textwidth-80pt,height=30pt]{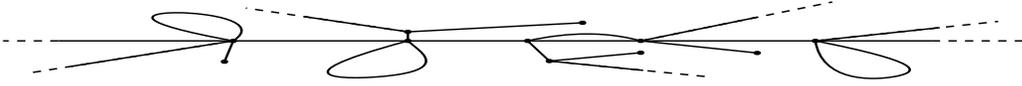}
	\caption{An infinite graph is an one-dimensional domain composed by vertices (points) connected by edges 
(segments and half-lines).}\label{fig:1}
\end{figure}

We consider a particle trapped on a network of wave-guides or wires where 
some 
branches are way longer than the others. We model the long branches with half-lines and the remaining ones with 
segments in order to represent the network by an infinite graph. The nodes of the 
network are ideal so that the crossing particle is 
subjected to zero resistance during the motion and we assume that the system is subjected to an external 
field which plays the role of control.

\smallskip

A natural choice for such setting is to represent the network by an infinite graph $\Gi$ and the state of the 
particle by a function $\psi$ with domain $\Gi$. The state $\psi$ belongs to a suitable
Hilbert space $\Hi$ and the dynamics of the particle is modeled by the bilinear 
Schr\"odinger equation in $\Hi$ \begin{align}\label{eq1}i\dd_t\psi(t,x)= A\psi(t,x) +u(t) B\psi(t,x),\ \ \ \ \ \ 
t\in 
[0,T],\ x\in \Gi,\end{align} where $A$ is a positive self-adjoint operator. The term $u(t)B$ 
represents the time 
dependent external field acting on the system which action is given by the bounded symmetric 
operator $B$ and its intensity by the control function 
$u\in L^2((0,T),\R)$. 

\smallskip

In this work, we consider $\Hi$ as the Hilbert space composed by $L^2_{loc}$ functions over the graph satisfying 
periodic boundary conditions on the infinite edges and $A$ is a Laplacian equipped with suitable boundary 
conditions. We study the controllability of the bilinear Schr\"odinger equation \eqref{eq1} according to the 
choice of the 
graph. Our purpose is to analyze when it is possible to control exactly the motion by 
time-varying the intensity of the external field.

\medskip

{\noindent \bf \underline{Some biblio}g\underline{ra}p\underline{h}y}

\smallskip

The mathematical analysis of operators defined on networks was
preliminarily addressed in \cite{209} by Ruedenber and Scherr. In this work, they studied the dynamics of 
particular electrons in the conjugated double-bounds organic molecules. These particles move
as if they were trapped on a
network of wave-guides and the graphs are obtained 
as the
idealization of such structures in the limit where the diameter of the section is much smaller 
than the length. A similar approach was developed by Saito in \cite{211, 212} where the graphs are obtained as 
``shrinking'' domains. For analogous ideas, we refer to the papers \cite{202, 208}.

\smallskip

The controllability of finite-dimensional quantum systems modeled by equations as \eqref{eq1},
when $A$ and $B$ are 
$N\times N$ Hermitian matrices, is well-known for being  
linked to the rank of the Lie algebra spanned by $A$ and $B$ (see \cite{basso,corona}). Nevertheless, the Lie 
algebra rank condition can not be used for infinite-dimensional quantum systems (see \cite{corona}).

\smallskip

The {global approximate controllability} of the bilinear Schr\"odinger equations \eqref{eq1} was proved with 
different techniques in 
literature. We refer to \cite{milo,nerse2} for Lyapunov techniques, to \cite{ugo2,ugo3} for 
adiabatic arguments and to \cite{ugo,nabile} for Lie-Galerking methods.

\smallskip

The {exact controllability} of infinite-dimensional quantum systems is in general more delicate. For instance, the 
controllability and observability of the linear Schr\"odinger equation are reciprocally 
dual. Various results were developed by addressing directly or by duality the control problem with multiplier 
methods \cite{lion,Machty}, 
microlocal analysis \cite{rauch,burqa,lollo} and 
Carleman estimates \cite{44,lasiecka,330}. However, a complete 
theory on networks
is 
far from being formulated. Indeed, the interaction between the different components of the structure may generate 
unexpected phenomena. For further details on the subject, we refer to \cite{wave}.

\smallskip

An important property of the bilinear Schr\"odinger equation is that its controllability can not be approached with 
the techniques valid for the 
linear Schr\"odinger equation. Indeed, the dynamics of \eqref{eq1} 
is well-known for not being exactly controllable in the Hilbert space $\Hi$
where it is defined when $B$ is a bounded operator and $u\in L^2((0,T),\R)$ with $T>0$ (even though it is 
well-posed in such space). This result was proved by Turinici in \cite{torino} by exploiting the theory developed 
by Ball, Mardsen and Slemrod in \cite{ball} (see 
\cite{bal, bal2} for other results on bilinear systems).
As a consequence, the classical 
techniques can not be exploited for the exact controllability of bilinear quantum systems.

\smallskip

The turning point for this kind of studies has been the idea of controlling the equation in specific subspaces of 
$D(A)$. 
Preliminarily introduced by Beauchard in \cite{be1}, this approach was mostly popularized by the work 
\cite{laurent} of Beauchard and Laurent. There, they considered the bilinear 
Schr\"odinger equation $\eqref{eq1}$ on the interval $\Gi=(0,1)$ when $\Hi=L^2((0,1),\C)$, $B$ is a suitable 
multiplication operator 
and $A=-\Delta_D$ is the Dirichlet Laplacian
$$D(-\Delta_D)=H^2((0,1),\C)\cap H^1_0((0,1),\C)),\ \ \ \ \ -\Delta_D\psi:=-\Delta\psi,\ \ \ \ \forall\psi\in 
D(-\Delta_D).$$ They proved the {well-posedness} and the {local 
exact 
controllability} of the equation in the space $D(|\Delta_D|^{3/2})$. Afterwards, different works on the subject 
were developed. We refer to 
\cite{beauchard2017,mio2} for {global exact controllability} results and
\cite{mio2,morgane1,morganerse2} for simultaneous exact controllability results.

\smallskip

The controllability of bilinear quantum systems on graphs was preliminarily 
addressed by the second author in \cite{mio3,mio4}. There, the 
bilinear Schr\"odinger equation \eqref{eq1} is considered in the Hilbert space $L^2(\Gi,\C)$ with $\Gi$ a 
compact graph and $A$ a suitable 
self-adjoint Laplacian. One of the main difficulties of this framework is due to the nature of the spectrum of 
$A$. In particular, when we consider its ordered sequence of eigenvalues $(\lambda_k)_{k\in\N^*}$, it is possible 
to show that there exists $\MM\in\N^*$ such that
\begin{equation}\label{g13}\begin{split}
\inf_{{k\in\N^*}}|\lambda_{k+\MM}-\lambda_k|>0\\
\end{split}\end{equation}
(as ensured in \cite[Lemma\ 2.4]{mio3}). Nevertheless, the uniform {spectral gap} 
$\inf_{{k\in\N^*}}|\lambda_{k+1}-\lambda_k|> 0$ is only valid when $\Gi=(0,1)$. This hypothesis was crucial for 
the 
techniques adopted in the previous works on bounded intervals, which 
could not be applied in this framework. 
To this purpose, new spectral techniques were developed in the works \cite{mio3,mio4} in order to ensure the 
global exact controllability of the bilinear Schr\"odinger equation \eqref{eq1} on compact graphs.

\smallskip

When we consider the bilinear Schr\"odinger equation \eqref{eq1} on infinite graphs instead, a natural obstacle to 
the controllability is the loss of localization of the wave packets 
during the evolution: the dispersion. This effect can be measured by $L^\infty$-time decay, which implies a 
spreading out of the solutions, due to the time invariance of the $L^2$-norm. Dispersive estimations on 
infinite graphs can be found in \cite{AAN17,AAN15}. The other side of the same coin is that a self-adjoint 
Laplacian $A$ on $L^2(\Gi,\C)$ where $\Gi$ is an infinite graph, does not admit compact resolvent and then, the 
spectral techniques from \cite{mio3,mio4} can not be directly applied to this framework.

\smallskip

Despite the dispersive behavior of the bilinear Schr\"odinger equation \eqref{eq1} on infinite graphs, 
the authors addressed the problem in \cite{mio5} by exploiting 
a simple but still effective idea. When $\Gi$ contains suitable substructures, 
the 
Laplacian $A$ admits discrete spectrum corresponding to some 
specific eigenmodes. Such states are preserved by the 
dynamics of \eqref{eq1} for suitable choices of $B$ and they are not affected by the dispersive 
behavior of the equation. By working on 
the space spanned by such eigenmodes, global exact controllability results for the equation
\eqref{eq1} can be ensured in suitable subspaces of $L^2(\Gi,\C)$ with $\Gi$ an infinite graph, as presented in 
\cite{mio5}. We underline that the considered eigenmodes are
supported in compact 
sub-graphs of $\Gi$ and then, the result is only valid for suitable states vanishing on the infinite 
edges of the graph.

\smallskip

From this perspective, our purpose is natural. We aim to carry on the existing theory by proving the
controllability of \eqref{eq1} for quantum states that do not vanish on the infinite edges of the graph. 
In this regard, we consider the bilinear Schr\"odinger equation \eqref{eq1} for periodic functions. This choice 
allows us 
to have non-compactly supported eigenmodes and then, to ensure the exact 
controllability for states also 
defined on the infinite edges of the graph.

\medskip

{\noindent \bf \underline{Scheme of the work}}

\smallskip

The paper is organized as follows. In Section \ref{preli}, we introduce the main notations of the 
work. In Sections \ref{sec2} and Section \ref{sec3}, we respectively prove the global exact controllability when 
$\Gi$ is an infinite 
tadpole graph and an infinite star graph. In the last section, we generalize the previous results to 
some general infinite graphs. 

\section{Preliminaries}\label{preli}

Let $\Gi$ be a general graph composed by $N$ finite edges $\{e_j\}_{1\leq j\leq N}$ of lengths $\{L_j\}_{1\leq 
j\leq N}$ and $\widetilde N$ half-lines $\{e_j\}_{N+1\leq j\leq N+\widetilde N}$. Each edge $e_j$ with $j\leq N$ 
is 
associated to a coordinate starting from $0$ and going to $L_j$, while $e_j$ with $N+1\leq j\leq N+\widetilde N$ is 
parametrized with a coordinate starting from $0$ and going to $+\infty$. We consider $\Gi$ as domain of functions 
$$f:=(f^1,...,f^{N+\widetilde N}):\Gi\rightarrow \C, \ \ \ \ \ \ \ \ \ f^j:e_j\rightarrow \C,\ \ \ \ \ 1\leq j\leq 
N+\widetilde N.$$ Let $\{L_j\}_{ N+1\leq j\leq N+\widetilde N}\subset \R^+$. We consider the Hilbert space 
\begin{align}\label{spazio}L^2_p:=L^2_p(\Gi,\C)= \Big(\prod_{j=1}^{N}L^2(e_j,\C)\Big)\times 
\Big(\prod_{j=N+1}^{N+\widetilde 
N}L^2_p(e_j,\C)\Big),\ \ \ \ \ \text{with}\end{align}
$$L^2_p(e_j,\C)=\Big\{f\in L^2_{loc}(e_j,\C)\ :\ f(\cdot)=f\big(\ \cdot\ +2\pi kL_j\big),\ \ \ \ \forall 
k\in\N^*\Big\},\ \ \ N+1\leq j\leq N+\widetilde N.$$
The Hilbert spaces $L^2_p$ is equipped with the norm $\|\cdot\|_{L^2_p}$ induced by the scalar product
$$\la\psi,\ffi\ra_{L^2_p}=\sum_{j=1}^{N+\widetilde N}\int_{0}^{L_j}\overline{\psi^j}(x)\ffi^j(x)dx,\ \ \ \  \ \ 
\forall \psi,\ffi\in L^2_p.$$
We introduce the spaces $$H^s_p:= L^2_p\cap\Bigg(\Big(\prod_{j=1}^{N}H^s(e_j,\C)\Big)\times 
\Big(\prod_{j=N+1}^{N+\widetilde N}H^s_{loc}(e_j,\C)\Big)\Bigg)$$ with $s>0$. For $T>0$, we consider the bilinear 
Schr\"odinger equation in $L^2_p$
\begin{equation}\label{mainx1}\tag{BSE}\begin{split}
\begin{cases}
i\dd_t\psi(t)=-A\psi(t)+u(t)B\psi(t),\ \ \ \ \ \ \ \ &t\in(0,T),\\
\psi(0)=\psi_0.\\
\end{cases}
\end{split}
\end{equation}
The operator $A$ is a Laplacian equipped with suitable boundary conditions such that $D(A)\subseteq H^2_p$. The 
operator $B$ is a bounded symmetric operator in $L^2_p$ and $u\in L^2((0,T),\R)$ with $T>0$. 
We respectively denote
$$\upvarphi:=\{\ffi_k\}_{k\in\N^*},\ \ \ \ \  \ \ \ \ \ \  \ \upmu:=\{\mu_k\}_{k\in\N^*}$$ an orthonormal system 
of 
$L^2_p$ made by some eigenfunctions of $A$ and the corresponding eigenvalues. For $s>0$, we define the spaces
$$\Hi(\upvarphi):=\overline{span\{\varphi_k\ |\ k\in\N^*\}}^{\ L^2_p},$$
\begin{equation}\label{spaces}\begin{split}
&H^s_{\Gi}(\upvarphi):=\{\psi\in \Hi(\upvarphi)\ | \ \sum_{k\in\N^*}|k^s\la\ffi_k,\psi\ra_{L^2_p}|^2<\infty\},\\
&h^s:=\Big\{\{a_k\}_{k\in\N^*}\in\ell^2(\C)\ \big|\ \sum_{k\in\N^*}|k^{s}a_k|^2<\infty\Big\}.\\
\end{split}\end{equation}
We respectively equip $H^s_{\Gi}(\upvarphi)$ and $h^s$ with the norms $\|\cdot\|_{(s)}=\big({\sum_{k \in 
\N^*}}|k^s\la\ffi_k,\cdot\ra_{L^2_p}|^2\big)^\frac{1}{2}$ and $$\|{\bf 
x}\|_{(s)}=\big(\sum_{k\in\N^*}|k^sx_k|^2\big)^\frac{1}{2},\ \ \ \ \ \ \ \  \ \forall {\bf x}:=(x_k)_{k\in\N^*}\in 
h^s.$$
\begin{oss}
The space $\Hi(\upvarphi)$ is usually strictly smaller than $L^2_p$. If for instance we consider $\Gi$ as a 
ring parametrized from $0$ to $1$ and $\upvarphi=\big\{\sqrt{2}\sin(2k\pi x)\big\}_{k\in\N^*}$, then 
$\Hi(\upvarphi)$ is composed by those $L^2_p$ states which are odd with respect to the point $x=1/2$ and clearly 
$\Hi(\upvarphi)\subset L_p^2.$ 
\end{oss}
\begin{oss}
	Let $\mu_k\sim k^2$ and $c\in \R^+$ be such that $0\not\in\sigma(A+c,\Hi(\upvarphi))$ (the spectrum of $A+c$ in 
the Hilbert space $\Hi(\upvarphi)$).  For every $s>0,$ there exist $C_1,C_2>0$ such that $$C_1\|\psi\|_{(s)}\leq 
\||A+c|^{s/2}\psi\|_{L^2_p}\leq C_2\|\psi\|_{(s)},\ \ \ \ \ \ \ \ \forall \psi \in H^s_\Gi(\upvarphi).$$
\end{oss}
Let $\G_T^u$ be the unitary propagator (when it is defined) corresponding to the dynamics of \eqref{mainx1} in the 
time interval $[0,T]$.
\begin{defi}\label{exact}
Let $\upvarphi$ be an orthonormal system 
of 
$L^2_p$ made by some eigenfunctions of $A$ and $s>0$.
The bilinear Schr\"odinger equation \eqref{mainx1} is said to be 
globally exactly controllable in 
$H^s_{\Gi}(\upvarphi)$ when, for every $\psi_1,\psi_2\in H^s_\Gi(\upvarphi)$ 
such that $\|\psi_1\|_{L^2_p}=\|\psi_2\|_{L^2_p}$, there exist $T>0$ and $u\in L^2((0,T),\R)$ such that
	$$\G_T^u\psi_1=\psi_2.$$	
\end{defi}

The aim of the work is to study the global exact controllability of the \eqref{mainx1} on infinite graphs in 
suitable spaces $H^s_{\Gi}(\upvarphi)$ with $s>0$.

\section{Infinite tadpole graph} \label{sec2}

Let $\Ti$ be an {\it infinite tadpole graph} composed by two edges $e_1$ and $e_2$. The self-closing edge $e_1$, 
the \virgolette{head}, is connected to $e_2$ in the vertex $v$ and it is parametrized in the clockwise direction 
with a coordinate going from $0$ to $1$ (the length of $e_1$). The \virgolette{tail} $e_2$ is an half-line equipped 
with a coordinate starting from $0$ in $v$ and going to $+\infty$. The tadpole graph presents a natural symmetry 
axis that we denote by $r$.
\begin{figure}[H]
	\centering
	\includegraphics[width=\textwidth-100pt]{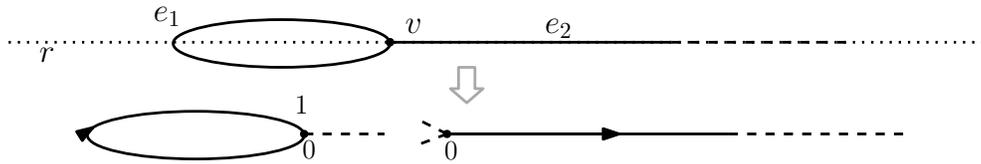}
	\caption{The parametrization of the infinite tadpole graph and its natural symmetry axis 
$r$.}\label{parametrizzazionetadpole1}
\end{figure}
Let $L^2_p$ be composed by functions which are periodic on the tail with period $1$, {\it i.e.} $L_2=1$. We 
consider the bilinear Schr\"odinger equation \eqref{mainx1} in $L^2_p$ with $A=-\Delta$ the Laplacian equipped with 
{\it Neumann-Kirchhoff\,} boundary conditions in the vertex $v$, {\it i.e.}
\begin{equation*}\begin{split}
D(A)=\Big\{\psi=(\psi^1,\psi^{2})\in H^2_p\ : \ \psi^1(0)=\psi^1(1)=\psi^2(0),\ \ \ \ \frac{\dd 
\psi^1}{\dd x}(0)-\frac{\dd 
\psi^1}{\dd x}(1)+\frac{\dd \psi^2}{\dd x}(0)=0\Big\}.\\
\end{split}
\end{equation*}
\begin{osss}\label{peculiarity}
The chosen operator $A$ is not self-adjoint in the Hilbert space $L^2_p$. This fact is an important 
peculiarity of this work with respect to the existing ones on bilinear quantum systems. However, we show how 
to 
construct subspaces of $L^2_p$ composed by eigenspaces of $A$ where the well-posedness and the controllability can 
be ensured.
\end{osss}

We assume the control field $B:\psi=(\psi^1,\psi^2)\longmapsto (V^1\psi^1,V^2\psi^2)$ being such that 
$$V^1(x)=x^2(x-1)^2,\ \ \ \ \ \ \ \ \ \ V^2(x)=\sum_{n\in\N}(x-n)^2(x-n-1)^2\upchi_{[n,n+1]}(x).$$
The choice of the potentials $V_1$ and $V_2$ is calibrated so that $B$ preserves 
the space $L^2_p$ and $V^1\psi^1\equiv V^2\psi^2|_{[n,n+1]}$ for every $n\in\N$ when
$\psi=(\psi^1,\psi^2)\in L^2_p$ is such that $\psi^1\equiv\psi^2|_{[n,n+1]}$ for every $n\in\N$.
In this framework, the \eqref{mainx1} 
corresponds to the two following Cauchy systems respectively in $L^2(e_1,\C)$ and $L^2_p(e_2,\C)$
\begin{equation}\tag{BSEt}\label{mainT}\begin{split}
\begin{cases}
i\dd_t\psi^1=-\Delta\psi^1+uV^1\psi^1,\\
\psi^1(0)=\psi^1_0,\\
\end{cases}\ \ \ \  \ \begin{cases}
i\dd_t\psi^2=-\Delta\psi^2+uV^2\psi^2,\\
\psi^2(0)=\psi^2_0.\\
\end{cases}
\end{split}
 \end{equation}
Let $\upvarphi:=\{\ffi_k\}_{k\in\N^*}$ be an orthonormal system of $L^2_p$ made by eigenfunctions of $-\Delta$ and 
corresponding to the eigenvalues $\upmu:=\{\mu_k\}_{k\in\N^*}$ such that, for every $k\in \N^*\setminus\{1\}$,
\begin{equation*}\begin{split}\begin{cases}
\ffi_k=\big(\cos({2(k-1)}\pi x),\cos({2(k-1)}\pi x)\big),\ \ \ \ \ \ \ \ \ \ &\mu_k={4(k-1)^2\pi^2},\\
\ffi_1=\big(\frac{\sqrt{2}}{2},\frac{\sqrt{2}}{2}\big),\ \ \ \ \ \ \ \ \ \ &\mu_1=0.
                              \end{cases}\end{split}\end{equation*}
	
\begin{osss}\label{equivalentdefi} We notice that each $f=(f^1,f^2)\in L^2_p$ belongs to $\Hi(\upvarphi)$ when
\begin{itemize}
 \item $f^1$ is symmetric with respect to the symmetry axis $r$ of $\Ti$;
 \item $f^2$ has period $2\pi$ and $f^2|_{[2n\pi,2(n+1)\pi]}\equiv f^1$ for every $n\in\N$. 
 \end{itemize}
 \end{osss}

\begin{prop}\label{lauraT}
	Let $\psi_0\in H^{4}_{\Ti}(\upvarphi)$ and $u\in L^2((0,T),\R)$. There
	exists a unique mild solution of the (\ref{mainT}) in
	$H^{4}_{\Ti}(\upvarphi)$, i.e. a function $\psi\in C^0\big([0,T],H^{4}_{\Ti}(\upvarphi)\big)$ such that
	\begin{equation}\label{mild}
	\psi(t)=e^{i\Delta t}\psi_0-
	i\int_0^t e^{i\Delta(t-s)}u(s)B\psi(s)ds .\\
	\end{equation}
	Moreover, the flow of \eqref{mainT} on $\Hi(\upvarphi)$ can be extended to a unitary flow $\G_t^{u}$ with 
respect to the $L^2_p-$norm such 
that $\G_t^{u}\psi_0=\psi(t)$ for any solution $\psi$ of \eqref{mainT} with initial data $\psi_0\in 
\Hi(\upvarphi)$.
\end{prop}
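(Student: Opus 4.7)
The plan is to obtain existence and uniqueness of the mild solution by applying the Banach contraction theorem to the Duhamel map
$$F(\psi)(t)=e^{i\Delta t}\psi_0-i\int_0^t e^{i\Delta(t-s)}u(s)B\psi(s)\,ds,$$
viewed as a self-map of $C^0([0,\tau],H^4_{\Ti}(\upvarphi))$ for $\tau>0$ small enough. Two ingredients are required. First, $e^{i\Delta t}$ is an isometry of $H^4_{\Ti}(\upvarphi)$, which is immediate since $-\Delta$ is diagonal in the orthonormal system $\upvarphi$ and $e^{i\Delta t}$ preserves the modulus of every Fourier coefficient. Second, and this is the main point, $B$ must be shown to be a bounded operator from $H^4_{\Ti}(\upvarphi)$ into itself.

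For this second step, by the Remark following \eqref{spaces} the norm $\|\cdot\|_{(4)}$ is equivalent to $\|(-\Delta+c)^2\cdot\|_{L^2_p}$ for a suitable shift $c$, so one needs to control $\|\Delta^{k}(B\psi)\|_{L^2_p}$ for $k=0,1,2$ together with the Neumann-Kirchhoff and periodicity conditions encoded in the space. The potentials have been chosen precisely for this: $V^1(x)=x^2(x-1)^2$ vanishes to order two at $x=0$ and $x=1$, so expanding
$$\Delta(V^1\psi^1)=(V^1)''\psi^1+2(V^1)'(\psi^1)'+V^1\Delta\psi^1$$
and iterating once more, one checks that $B\psi$ and $\Delta(B\psi)$ remain continuous at the vertex $v$ and keep satisfying the Kirchhoff relation as soon as $\psi$ does. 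The analogous statement for $V^2$ holds because $V^2$ is the periodic extension of $V^1$ with the same double zero at every integer, so that the periodicity of $\psi^2$ and the matching $\psi^2|_{[n,n+1]}\equiv \psi^1$ characterizing $\Hi(\upvarphi)$ (Remark \ref{equivalentdefi}) are preserved under multiplication by $B$. A direct $L^2_p$-estimate of each term produced by two Laplacians then yields $\|B\psi\|_{(4)}\leq C\|\psi\|_{(4)}$.

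With these two ingredients, a standard contraction argument on $C^0([0,\tau],H^4_{\Ti}(\upvarphi))$ for $\tau$ small in terms of $\|u\|_{L^2}$ and the constant $C$ above produces a local mild solution, which is then iterated on $[0,T]$. Globalization relies on $L^2_p$-conservation: since $B$ is symmetric and $u$ real-valued, $\frac{d}{dt}\|\psi\|_{L^2_p}^2=0$ on smooth solutions and, by density, on all mild solutions in $H^4_{\Ti}(\upvarphi)$; combined with a Gronwall-type bound for $\|\psi(t)\|_{(4)}$ this rules out blow-up on $[0,T]$. Finally, the resulting $L^2_p$-isometric flow, defined on the dense subspace $H^4_{\Ti}(\upvarphi)\subset\Hi(\upvarphi)$, extends by continuity to a unique unitary propagator $\G_T^{u}$ on $\Hi(\upvarphi)$. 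The main obstacle is the second step above: the Neumann-Kirchhoff conditions are imposed on both $\psi$ and $\Delta\psi$, so multiplication by a generic potential would destroy them, and the double-zero structure of $V^1$ and $V^2$ at the vertex $v$ and at every integer node of the tail is exactly what makes the argument close.
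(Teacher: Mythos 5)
Your overall architecture (Duhamel map, contraction, unitary extension) matches the paper's, but the central claim on which your contraction rests --- that $B$ is bounded from $H^4_{\Ti}(\upvarphi)$ into itself --- is false, and this is precisely the difficulty the proposition is designed to overcome. The space $H^4_{\Ti}(\upvarphi)$ is spanned (with $k^4$-weights) by the cosines $\cos(2(k-1)\pi x)$, so a function $\psi\in H^4_{\Ti}(\upvarphi)$ satisfies not only the conditions you checked (continuity and the Kirchhoff relation for $\psi$ and for $\Delta\psi=\psi''$) but also $\dd_x\psi^1(0)=\dd_x\psi^1(1)=0$ and $\dd_x^3\psi^1(0)=\dd_x^3\psi^1(1)=0$. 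Now compute the third derivative of $V^1\psi^1$ at the vertex: since $(V^1)'''(0)=-12$, $(V^1)'''(1)=12$ and $(V^1)''(0)=(V^1)''(1)=2$, one gets $\dd_x^3(V^1\psi^1)(0)=-12\,\psi^1(0)$ and $\dd_x^3(V^1\psi^1)(1)=12\,\psi^1(1)$, which do not vanish unless $\psi(v)=0$. Hence $B\psi\notin H^4_{\Ti}(\upvarphi)$ for generic $\psi\in H^4_{\Ti}(\upvarphi)$: the double zero of $V^1$ guarantees $B\psi\in H^4_p\cap H^2_{\Ti}$, but no more. Your estimate $\|B\psi\|_{(4)}\le C\|\psi\|_{(4)}$ therefore fails, and the contraction argument cannot close in the way you describe.

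The paper's proof circumvents this by exploiting the smoothing effect of the time integral rather than a mapping property of $B$ alone. Integrating $\la\ffi_k,u(s)B\psi(s)\ra_{L^2_p}$ by parts four times leaves exactly the non-vanishing boundary terms $\dd_x^3f^1(s,1)-\dd_x^3f^1(s,0)$ over $\mu_k^2\sim k^4$; after multiplication by $k^4$ these produce the sequence $\big\{\int_0^t\big(\dd_x^3f^1(s,1)-\dd_x^3f^1(s,0)\big)e^{i\mu_k s}ds\big\}_k$, which is \emph{not} controlled term by term but is shown to lie in $\ell^2$ uniformly in $t$ via the Ingham-type estimate of \cite[Proposition~B.6]{mio3} (this uses the spectral gap $\inf_k|\mu_{k+1}-\mu_k|=4\pi^2$). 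That is the missing ingredient: the map $\psi\mapsto\int_0^te^{i\Delta(t-s)}u(s)B\psi(s)\,ds$ is bounded on $C^0([0,T],H^4_{\Ti}(\upvarphi))$ even though $B$ itself is not. A secondary, minor point: no Gronwall or blow-up argument is needed for globalization, since the equation is linear in $\psi$ and one simply iterates the contraction over a partition of $[0,T]$ on which $\|u\|_{L^2}$ is small on each piece.
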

\begin{proof} 
{\bf 1) Unitary flow.} We consider Remark \ref{equivalentdefi}. For every $f=(f^1,f^2)\in \Hi(\upvarphi)$, we 
notice that $(B f)^1$ inherits from $f^1$ the property of being symmetric with respect to the symmetry axis $r$, 
while $(Bf)^2|_{[2n\pi,2(n+1)\pi]}\equiv(B f)^1$ for every $n\in\N$ as $f^2|_{[2n\pi,2(n+1)\pi]}\equiv f^1$ for 
every $n\in\N$. Now, $(Bf)^2$ has period $2\pi$ and $(Bf)^2(x)=(Bf)(2(n+1)\pi-x)$ for every $n\in\N$ and 
$x\in[2n\pi,(2n+1)\pi].$ Thus, $Bf=(V^1f^1,V^2f^2)\in \Hi(\upvarphi)$ for every $f=(f^1,f^2)\in \Hi(\upvarphi)$ 
and the 
control field $B$ preserves $\Hi(\upvarphi)$. The space $\Hi(\upvarphi)$ is a Hilbert space where the operator $A$ 
is self-adjoint and $B$ is bounded 
symmetric. 
Thanks to \cite[Theorem\ 2.5]{ball}, the \eqref{mainT} admits a unique solution $\psi\in 
C^0([0,T],\Hi(\upvarphi))$ 
for every $T>0$ and $\psi_0\in\Hi(\upvarphi)$. 
The flow of \eqref{mainT} is unitary in $\Hi(\upvarphi)$ thanks to the following arguments. If $u\in 
C^0((0,T),\R)$, then $\psi\in C^1((0,T),\Hi(\upvarphi))$ and $\dd_t\|\psi(t)\|_{L^2_p}^2=0$ from (\ref{mainT}). 
Thus $\|\psi(t)\|_{L^2_p} =\|\psi_0\|_{L^2_p}$. The generalization for $u\in L^2((0,T),\R)$ follows from a 
classical density argument, which ensures that the flow of the dynamics of the \eqref{mainT} is unitary in 
$\Hi(\upvarphi)$.

\smallskip
\needspace{3\baselineskip}

	\noindent
	{\bf 2) Regularity of the integral term in the mild solution.} The remaining part of the proof refers to the 
techniques leading to \cite[Lemma\ 1;\ Proposition\ 2]{laurent} (also adopted in the proof of 
\cite[Proposition\ 
2.1]{mio5}). Let $\psi\in C^0([0,T],H_{\Ti}^4(\upvarphi))$ with $T>0$. We notice $B\psi(s) \in 
H_p^{4}\cap H_{\Ti}^2(\upvarphi)$ for 
almost every $s\in (0,t)$ and $t\in(0,T)$. Let $G(t,x)=\int_0^t e^{i\Delta(t-s)}u(s)B\psi(s,x)ds$ so that
	\begin{equation*}\begin{split}
	\|G(t)\|_{(4)}&=\Big(\sum_{k\in\N^*}\Big|k^4\int_0^t e^{i\mu_ks}\la\ffi_k,u(s)B\psi(s,\cdot)\ra_{L^2_p} 
ds\Big|^2\Big)^\frac{1}{2}.\\
	\end{split}\end{equation*}
	For $f(s,\cdot):=u(s)B\psi(s,\cdot)$ such that $f=(f^1,f^2)$ and $k\in\N^*\setminus\{1\},$ we have
	\begin{equation*}\begin{split}
	&\la\ffi_k,f(s)\ra_{L^2_p}=-\frac{1}{\mu_k 
}\Big(\int_{0}^1\ffi_k^1(y)\dd_{x}^2f^1(s,y)dy+\int_{0}^{1}\ffi_k^2(y)\dd_{x}^2f^2(s,y)dy\Big)\\
	&=-\frac{2}{\mu_k }\int_{0}^1\ffi_k^1(y)\dd_{x}^2f^1(s,y)dy=\frac{1}{4 (k-1)^3\pi^3 }\int_{0}^1\sin(2 (k-1)\pi 
x)\dd_{x}^3f^1(s,y)dy\\
	&=\frac{1}{8(k-1)^4\pi^4 }\Bigg(\dd_{x}^3f^1(s,1)-\dd_{x}^3f^1(s,0)-\int_{0}^1\cos(2 (k-1)\pi x) 
\dd_{x}^4f^1(s,y)dy\Bigg).\\
	\end{split}\end{equation*}
	In the last relations, we considered  
$\ffi_k^1(\cdot)\dd_{x}^2f^1(s,\cdot)|_{[0,1]}=\ffi_k^2(\cdot)\dd_{x}^2f^2(s,\cdot)|_{[0,1]}$ as 
$\dd_{x}^2f^1(s,\cdot)|_{[0,1]}=\dd_{x}^2f^2(s,\cdot)|_{[0,1]}$. Equivalently to the first point of the proof of 
\cite[Proposition\ 2.1]{mio5}, there exists $C_1>0$ such that
	\begin{equation*}\begin{split}
	\|G(t)\|_{(4)}\leq& C_1\Big(\Big\|\int_0^t \big(\dd_{x}^3f^1(s,1)-\dd_{x}^3f^1(s,0)\big)e^{i \mu_{(\cdot)}s}ds 
\Big\|_{\ell^2} +\sqrt{t}\|f\|_{L^2((0,t),H^4_p)}\Big).
	\end{split}\end{equation*}
Thanks \cite[Proposition\ B.6]{mio3}, there exists $C_2(t)>0$ uniformly bounded for $t$ in bounded intervals such 
that $\|G(t)\|_{(4)} \leq C_2(t)\|f(\cdot,\cdot)\|_{L^2((0,t),H^4_p)}.$ For every $t\in [0,T]$, the last 
inequality 
shows that $G(t)\in H^4_\Ti(\upvarphi)$ and the provided upper bound is uniform. The Dominated Convergence Theorem 
leads to 
$G\in C^0([0,T], H^4_{\Ti}(\upvarphi))$.

	\smallskip
	
	\needspace{3\baselineskip}

	\noindent
	{\bf 3) Conclusion. } As $Ran(B|_{H_{\Ti}^{4}}(\upvarphi))\subseteq H_p^{4}\cap H^2_{\Ti}(\upvarphi)\subseteq 
H^{4}_p$, we have $B\in 
\LL(H^{4}_{\Ti}(\upvarphi),H^{4}_p)$ thanks to the arguments of \cite[Remark\ 2.1]{mio1}. Let $\psi_0\in 
H^{4}_{\Ti}(\upvarphi)$. We consider the map $$F:\psi \in C^0([0,T],H^{4}_{\Ti}(\upvarphi))\mapsto\phi\in 
C^0([0,T],H^{4}_{\Ti}(\upvarphi)) ,$$
	$$\phi(t)=F(\psi)(t)=e^{i\Delta t}\psi_0-\int_0^te^{i\Delta(t-s)}u(s) B\psi(s)ds,\ \ \ \ \ \forall t\in 
[0,T].$$
	For every $\psi_1,\psi_2\in C^0([0,T], H^4_{\Ti}(\upvarphi))$, from the first point of the proof, there exists 
$C(t)>0$ 
uniformly bounded for $t$ lying on bounded intervals such that
	\begin{equation*}\begin{split}
	&\|F(\psi_1)-F(\psi_2)\|_{L^\infty((0,T),H^{4}_\Ti(\upvarphi))}\leq\left\|\int_0^{(\cdot)} 
e^{i\Delta((\cdot)-s)}u(s) 
B(\psi_1(s)-\psi_2(s))ds\right\|_{L^\infty((0,T),H^{4}_\Ti(\upvarphi))}\\
	&\leq C(T)\|u\|_{L^2((0,T),\R)}\iii B\iii_{\LL(H^{4}_{\Ti}(\upvarphi),H^{4}_p)} 
\|\psi_1-\psi_2\|_{L^\infty((0,T),H^{4}_\Ti(\upvarphi))}.\\
	\end{split}\end{equation*}
	If $\|u\|_{L^2((0,T),\R)}$ is small enough, then $F$ is a contraction and Banach Fixed Point Theorem yields 
the 
existence of $\psi \in C^0([0,T],H^{4}_{\Ti}(\upvarphi) )$ such that $F(\psi)=\psi.$ When $\|u\|_{L^2((0,T),\R)}$ 
is not sufficiently small, we decompose $(0,T)$ with a sufficiently thin partition $\{t_j\}_{0\leq j\leq n}$ with 
$n\in\N^*$ such that each $\|u\|_{L^2((t_{j-1},t_j),\R)}$ is so small such that $F$ defined on the interval 
$[t_{j-1},t_j]$ is a contraction. The well-posedness on $[0,T]$ is defined by gluing each flow defined in 
every interval of the partition. \qedhere
\end{proof}

We are finally ready to present the following global exact controllability result (Definition 
\ref{exact}).
\begin{teorema}\label{globalegirino}
	The (\ref{mainT}) is globally exactly controllable in $H^4_\Ti(\upvarphi)$.
	\end{teorema}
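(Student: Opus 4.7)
The plan is to reduce the controllability problem on $\Hi(\upvarphi)$ to a classical bilinear Schr\"odinger controllability problem on the unit interval with Neumann boundary conditions, and then verify the spectral hypotheses that trigger a Beauchard--Laurent type scheme. By Remark \ref{equivalentdefi}, any $f=(f^1,f^2)\in\Hi(\upvarphi)$ is entirely determined by $f^1\in L^2(e_1,\C)$, which is symmetric with respect to $r$. The Neumann--Kirchhoff condition defining $D(A)$ reduces, on such symmetric functions, to Neumann boundary conditions at $x=0$ and $x=1$. Hence \eqref{mainT} restricted to $\Hi(\upvarphi)$ is unitarily equivalent to $i\dd_t\psi^1=-\Delta\psi^1+u(t)V^1\psi^1$ on $L^2(0,1)$ with Neumann data, in the eigenbasis $\ffi_k^1=\cos(2(k-1)\pi x)$ with eigenvalues $\mu_k=4(k-1)^2\pi^2$.

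I then check the two spectral ingredients needed for local exact controllability about $\ffi_1$. The spectral gap $\mu_{k+1}-\mu_k=4\pi^2(2k-1)$ diverges, so an Ingham inequality holds for $\{e^{i(\mu_k-\mu_1)t}\}_{k\in\N^*}$ on every time window $T>0$. The coupling $\la B\ffi_k,\ffi_1\ra_{L^2_p}$ is then computed by four integrations by parts on $\int_0^1 V^1(x)\cos(2(k-1)\pi x)\,dx$: the vanishing of $V^1$ and $(V^1)'$ at $\{0,1\}$, combined with $(V^1)'''(0)=-12$, $(V^1)'''(1)=12$, $(V^1)^{(4)}\equiv 24$, and the fact that $V^2|_{[0,1]}\equiv V^1$, yields
$$\la B\ffi_k,\ffi_1\ra_{L^2_p}=-\frac{24\sqrt{2}}{(2(k-1)\pi)^4},\qquad k\geq 2.$$
In particular all these matrix elements are nonzero, with decay exactly of order $k^{-4}$, matching the regularity level $H^4_\Ti(\upvarphi)$ at which Proposition \ref{lauraT} provides well-posedness.

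With these ingredients in hand, I linearize the end-point map $u\mapsto\G_T^u\ffi_1$ at $u\equiv 0$; the linearization is the trigonometric moment operator $v\mapsto\bigl(\la B\ffi_k,\ffi_1\ra\int_0^T v(s)e^{i(\mu_k-\mu_1)s}ds\bigr)_{k\in\N^*}$ from $L^2((0,T),\R)$ into the tangent space to the $L^2_p$-unit sphere at $\ffi_1$ inside $H^4_\Ti(\upvarphi)$. Surjectivity follows from the Ingham gap together with the $k^{-4}$ coupling lower bound, since solving the moment problem in $\ell^2$ corresponds exactly to hitting targets in $h^4$. The inverse mapping theorem, applied thanks to Proposition \ref{lauraT}, yields local exact controllability in $H^4_\Ti(\upvarphi)$ in an $H^4$-neighborhood of $\ffi_1$ on the unit sphere. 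Globalization then proceeds by the standard chain argument exploiting unitarity of $\G_T^u$ and time reversibility of \eqref{mainT}: arbitrary $\psi_1,\psi_2$ on the unit sphere of $H^4_\Ti(\upvarphi)$ are connected by concatenating a control steering $\psi_1$ into the neighborhood of $\ffi_1$ with a control from there to $\psi_2$, while rescaling by the $L^2_p$-norm handles states of arbitrary equal norm. Energetic controllability with respect to $\upmu$ is immediate, since every $\ffi_m\in H^4_\Ti(\upvarphi)$ and $\|\ffi_m\|_{L^2_p}=1$.

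The main obstacle is the coupling computation: the particular choice $V^1(x)=x^2(x-1)^2$, together with the translated construction of $V^2$, has been fine-tuned so that $B$ preserves $\Hi(\upvarphi)$ and the off-diagonal coefficients $\la B\ffi_k,\ffi_1\ra$ decay exactly at the rate $k^{-4}$ that matches the $H^4$ well-posedness; weaker decay would break non-degeneracy, faster decay would destroy surjectivity of the moment operator. The auxiliary non-resonance condition $\mu_k-\mu_1\neq\mu_j-\mu_1$ for $k\neq j$, also required by the scheme, is trivial here because the eigenvalues $\mu_k=4(k-1)^2\pi^2$ are all distinct.
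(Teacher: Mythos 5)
Your local exact controllability argument is essentially the paper's: you set up the same moment problem $x_k B_{k,1}^{-1}=-i\int_0^T u(\tau)e^{i(\mu_k-\mu_1)\tau}d\tau$, verify the uniform spectral gap and the lower bound $|\la\ffi_k,B\ffi_1\ra_{L^2_p}|\geq C k^{-4}$, and conclude by an inverse function theorem in $h^4$; your explicit integration-by-parts computation of $\la B\ffi_k,\ffi_1\ra_{L^2_p}=-24\sqrt{2}\,(2(k-1)\pi)^{-4}$ is correct and makes the paper's \virgolette{by direct computation} step concrete. The reduction to a Neumann problem on the symmetric subspace of $L^2(0,1)$ is a harmless reformulation. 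The energetic controllability conclusion is also as in the paper.

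The gap is in the globalization. You write that arbitrary $\psi_1,\psi_2$ are connected \virgolette{by the standard chain argument exploiting unitarity and time reversibility}, concatenating a control steering $\psi_1$ into the neighborhood of $\ffi_1$ with one from there to $\psi_2$. But nothing in your argument produces a control steering an \emph{arbitrary} $\psi_1\in H^4_\Ti(\upvarphi)$ into the $H^4$-neighborhood $O^4_{\epsilon,T}$ of $\ffi_1$: local exact controllability has only been established around the ground state $\ffi_1$ (the moment problem involves the coefficients $B_{k,1}$ only), so a chain of local neighborhoods covering the unit sphere is not available, and reversibility only lets you run the local result backwards \emph{from} states already close to $\ffi_1$. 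The paper closes this gap with a separate, substantial ingredient: global \emph{approximate} controllability in the $H^4_\Ti$-norm (Remark \ref{approxT}, resting on Proposition \ref{approx} and \cite[Theorem B.2]{mio5}), whose proof requires verifying a non-resonance/non-degeneracy condition on the \emph{diagonal} elements $\la\ffi_j,B\ffi_j\ra_{L^2_p}-\la\ffi_k,B\ffi_k\ra_{L^2_p}-\la\ffi_l,B\ffi_l\ra_{L^2_p}+\la\ffi_m,B\ffi_m\ra_{L^2_p}\neq 0$ for resonant quadruples $\mu_j-\mu_k=\mu_l-\mu_m$ (a non-degenerate chain of connectedness for $(A+u_0B,B)$). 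Your closing remark only checks the trivial condition $\mu_k\neq\mu_j$, which is the wrong non-resonance condition for this step. Without the approximate controllability input (or an equivalent substitute), the passage from local to global exact controllability does not go through.
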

\begin{proof}
The statement is proved by using the arguments adopted in the proof of \cite[Theorem\ 2.2]{mio5}.

\smallskip

\noindent
	{\bf 1) Local exact controllability.} We notice that $\ffi_1(T)=e^{-i\mu_1T}\ffi_1=\ffi_1 $ with $T>0$ as the 
first eigenvalue $\mu_1$ is equal to $0$. For $\epsilon,T>0$, we define 
	$$O_{\epsilon}^{4}:=\big\{\psi\in H_{\Ti}^{4}(\upvarphi)\big|\ : \ \|\psi\|_{L^2_p}=1,\ \|\psi 
-\ffi_1\|_{(4)}<\epsilon\big\}.$$ We ensure there exist $T,\epsilon>0$ so that, for every $\psi\in 
O_{\epsilon}^{4}$, there exists $u\in L^2((0,T),\R)$ such that $\psi= \G^u_T\ffi_1.$
	The result can be proved by showing the surjectivity of the map $\G_T^{(\cdot)}\ffi_1:u\in 
L^2((0,T),\R)\longmapsto \psi \in O_{\epsilon}^{4}$ with $T>0$. Let 
$$\G_{t}^{(\cdot)}\ffi_1=\sum_{k\in\N^*}{\ffi_k(t)}\la \ffi_k(t),\G_{t}^{(\cdot)}\ffi_1\ra_{L^2_p}.$$ We recall the 
definition of $h^4$ provided in \eqref{spaces}. Let $\alpha$ be the map 
defined as the sequence with elements $\alpha_{k}(u)=\la \ffi_k(T), \G_{T}^u\ffi_1\ra_{L^2_p}$ for $k\in\N^*$ such 
that
	$$\alpha:L^2((0,T),\R)\longrightarrow Q:=\{{\bf x}:=\{x_k\}_{k\in\N^*}\in h^4(\C)\ |\ \|{\bf 
x}\|_{\ell^2}=1\}.$$ 
The local exact controllability follows from the local surjectivity of $\alpha$ in a neighborhood of 
$\alpha(0)=\updelta=\{\delta_{k,1}\}_{k\in\N^*}$ with respect to the $h^4-$norm. To this end, we consider the 
Generalized Inverse Function Theorem and we study the surjectivity of 
$\gamma(v):=(d_u\alpha(0))\cdot\ v$ the Fréchet derivative of $\alpha$. Let $B_{k,1}:=\la\ffi_k,B\ffi_1\ra_{L^2_p}$ 
with $k\in\N^*$. The map $\gamma$ is the sequence of elements $\gamma_{k}(v):=
	-i\int_{0}^Tv(\tau)e^{i(\mu_k-\mu_1)s}d\tau B_{k,1}$ with $k\in\N^*$ so that
	$$\gamma:L^2((0,T),\R)\longrightarrow T_{\updelta}Q=\{{\bf x}:=\{x_k\}_{k\in\N^*}\in h^4(\C)\ |\ 
ix_1\in\R\}.$$ 
	As $\mu_1=0$, the surjectivity of $\gamma$ corresponds to the solvability of the moments problem
	\begin{equation}\begin{split}\label{mome1}
	{x_{k}}{B_{k,1}^{-1}}=-i\int_{0}^Tu(\tau)e^{i\mu_k\tau}d\tau,\ \ \ \ \ \ \ \ \ \ \forall 
\{x_{k}\}_{k\in\N^*}\in T_{\updelta}Q\subset h^4. 
	\\
	\end{split}\end{equation} 
	By direct computation, there exists $C>0$ such that 
$|B_{k,1}|=|\la\ffi_k,B\ffi_1\ra_{L^2_p}|\geq\frac{C}{k^4}$ for every $k\in\N^*$ and
	$$\big\{x_k B_{k,1}^{-1}\big\}_{k\in\N^*}\in \ell^2,\ \ \ \ \ \ \ \ \ \ \ \ \ \ \ i{x_{1}}B_{k,1}^{-1}\in\R.$$ 
	In conclusion, the solvability of $(\ref{mome1})$ is guaranteed by \cite[Proposition\ B.5]{mio3} since 
$$\{ix_k 
B_{k,1}^{-1}\}_{k\in\N^*}\in\{\{c_k\}_{k\in\N^*}\in \ell^2\ |\ c_1\in\R\},\ \ \ \ \ \ 
\inf_{k\in\N^*}|\mu_{k+1}-\mu_k|={4\pi^2}.$$

	\smallskip
	
	\noindent
	{\bf 2) Global exact controllability.}  Let $T,\epsilon>0$ be so that {\bf 1)} is valid. Thanks to Remark 
\ref{approxT}, for any $\psi_1,\psi_2\in H^{4}_{\Ti}(\upvarphi)$ such that 
$\|\psi_1\|_{L^2_p}=\|\psi_2\|_{L^2_p}=p$, there exist $T_1,T_2>0$, $u_1\in L^2((0,T_1),\R)$ and $u_2\in 
L^2((0,T_2),\R)$ such that $$\|\G^{u_1}_{T_1}p^{-1}\psi_1-\ffi_1\|_{(4)}<{\epsilon},\ \ \ \ \ 
\|\G^{u_2}_{T_2}p^{-1}\psi_2-\ffi_1\|_{(4)}<{\epsilon}$$
	and $ p^{-1}\G^{u_1}_{T_1}\psi_1,p^{-1}\G^{u_2}_{T_2}\psi_2\in O_{\epsilon}^{4}.$ From {\bf 1)}, there exist 
$u_3,u_4\in L^2((0,T),\R)$ such that $$\G_T^{u_3}\G^{u_1}_{T_1}\psi_1=\G_T^{u_4}\G^{u_2}_{T_2}\psi_2=p\ffi_1\ \ \ 
\ 
 \Longrightarrow \ \ \ \ \ \exists T>0,\ \widetilde u\in L^2((0,\widetilde T),\R)\ \ :\ \ \G_{\widetilde 
T}^{\widetilde u}\psi_1=\psi_2. 
$$ \end{proof}

%\subsection{Further controllability results}{\ }

\smallskip

\noindent
Let $\Phi:=\{\phi_k\}_{k\in\N^*}$ be an orthonormal system of $L^2_p$ made by eigenfunctions of $-\Delta$ and 
corresponding to the eigenvalues $\Lambda:=\{\lambda_k\}_{k\in\N^*}$ such that
$$\phi_k=\big(\sqrt{{2}}\sin({2k}\pi x),0\big),\ \ \ \ \ \ \ \ \ \ \lambda_k={4k^2\pi^2},\ \ \ \ \ \ \ \ \ \ 
\forall k\in \N^*.$$ 
We notice that the results \cite[Theorem\ 2.1;\ Theorem\ 2.2]{mio5} are still valid in the current framework and 
they lead to the following proposition.

\begin{prop}\label{lauraT1}
	Let (\ref{mainT}) be considered with $V_1(x)=x(1-x)$ and $V_2=0$ The (\ref{mainT}) is well-posed and globally 
exactly controllable in 
$H^3_\Ti(\Phi)$.
\end{prop}

The techniques leading to Proposition \ref{lauraT}, Theorem \ref{globalegirino} and Proposition \ref{lauraT1} also 
imply the following corollary.

\begin{coro}\label{corogirino}
	Let (\ref{mainT}) be considered with $$V^1(x)=x(1-x)+x^2(x-1)^2,\ \ \ 
V^2(x)=\sum_{n\in\N}(x-n)^2(x-n-1)^2\upchi_{[n,n+1]}(x).$$ The (\ref{mainT}) is well-posed and globally exactly 
controllable in 
$H^4_\Ti(\upvarphi)$ and $H^3_\Ti(\Phi)$.
\end{coro}

\begin{osss}\label{extension}
The choice of the lengths $L_1=1$ and $L_2=1$ has been done in order to simplify 
the theory of the current section. Nevertheless, it is possible to obtain similar results by considering different 
parameters $L_1$ and $L_2$ such that $L_1/L_2\in\Q$. A very similar situation is considered in the next 
section for a star graph with infinite spokes.
\end{osss}

\section{Star graph with infinite spokes} \label{sec3}
Let $\Si$ be a {\it star graph} composed by $N$ segments $\{e_j\}_{1\leq j\leq N}$ of lengths $\{L_j\}_{1\leq 
j 
\leq N}$ and $\widetilde N$ half-lines $\{e_j\}_{N+1\leq j\leq N+\widetilde N}$. The edges are connected in the 
internal vertex $v$, while $\{v_j\}_{1\leq j\leq N}$ are the external vertices of $\Si$ (those vertices of 
$\Si$ connected with only one edge). Each $e_j$ with $1\leq j\leq N$ is associated to a coordinate starting from 
$0$ in $v_j$ and going to $L_j$, while $e_j$ with $N+1\leq j\leq N+\widetilde N$ is parametrized with a coordinate 
starting from $0$ in $v$ and going to infinite.
\begin{figure}[H]
	\centering
	\includegraphics[width=\textwidth-100pt]{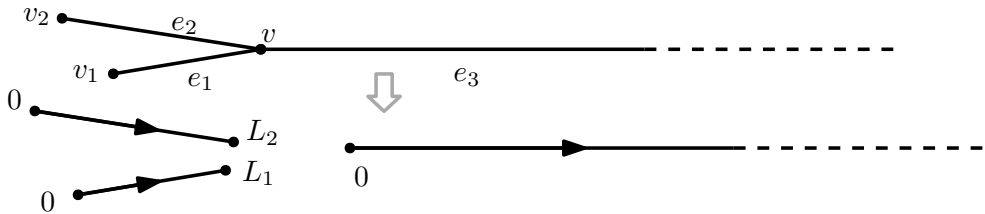}
	\caption{The parametrization of a star graph composed by $N=2$ segments and $\widetilde N=1$ half-lines.}
\end{figure}
Let $L^2_p$ be defined in \eqref{spazio}. This space is composed by functions which are periodic on the infinite 
edges with periods $\{L_j\}_{N+1\leq j\leq N+\widetilde N}$. We consider the bilinear Schr\"odinger equation 
\eqref{mainx1} in $L^2_p$ and the Laplacian $A=-\Delta$ being equipped with {\it Neumann-Kirchhoff\,} boundary 
conditions in $v$ and {\it Neumann\,}  boundary conditions in $\{v_{j}\}_{1\leq j\leq N}$, {\it i.e.}
\begin{equation*}\begin{split}
D(A)=&\Big\{\psi=(\psi^1,...,\psi^{N+\widetilde N})\in  H^2_p\ \  :\ \  \sum_{j=1}^{N}\frac{\dd \psi^j}{\dd 
x}(L_j)=\sum_{j=N+1}^{N+\widetilde N}\frac{\dd \psi^j}{\dd x}(0),\\
&\ \ \ \psi\in C^0(\Si,\C),\ \ \ \ \ \ \frac{\dd \psi^j}{\dd x}(v_j)=0\ \ \ \ \ \forall 1\leq j\leq N\Big\}.\\
\end{split}
\end{equation*}
Let $B:\psi\in L^2_p\mapsto B\psi=\big((B\psi)^1,...,(B\psi)^{N+\widetilde N}\big)$ be a bounded symmetric 
operator. The \eqref{mainx1} corresponds to the following Cauchy systems in $L^2(e_j,\C)$ when $1\leq j\leq N$ and 
in $L^2_p(e_j,\C)$ when $ N+1\leq j\leq N+\widetilde N$
\begin{equation}\tag{BSEs}\label{mainS}\begin{split}
\begin{cases}
i\dd_t\psi^j(t)=-\Delta\psi^j(t)+u(t)(B\psi)^j(t),& \ \ \ \ \ \ t\in(0,T),\\
\psi^j(0)=\psi^j_0.\\
\end{cases}
\end{split}
\end{equation}
\begin{osss}\label{peculiarity1}
As in Section \ref{sec2}, the chosen operator $A$ is not self-adjoint in the Hilbert space $L^2_p$. 
The central point here is to seek for the correct framework where the existence of eigenfunctions for 
$A$ is guaranteed. It is clear that the periodicity conditions on each infinite edge $e_j$ with $N+1\leq j\leq 
N+\widetilde N$ force any eigenvalue 
$\lambda$ of $A$ to be of the form $\frac{4k^2\pi^2}{L_j}$ with $k\in\N$. 
Thus, the eigenvalues $\lambda$ has to be contained in $\bigcap_{ N+1\leq j\leq N+\widetilde 
N}\big\{\frac{4k^2\pi^2}{L_j}\big\}_{k\in\N^*}$ which has to be non-empty. This is possible for suitable 
resonant lengths for the edges of the graphs. In the following part of this section we introduce a set of 
assumptions ensuring this fact.
\end{osss}
Let $L_{N+1}/L_j\in\Q$ for every $N+2\leq j\leq N+\widetilde N$. We denote by $l_j\in\N^*$ the smallest natural 
number such that 
\begin{equation}\label{multipliers}l_j\frac{L_{N+1}}{L_{j}}\in\N^*,\ \ \ \ \ \  \ \text{with}\ \ \ \ 1\leq j\leq 
N+\widetilde N.\end{equation} 
Let $n_k:=(k-1)\prod_{j=N+1}^{N+\widetilde N}l_j\frac{L_{N+1}}{L_j}\in\N^*$ for every $k\in\N^*$. We notice 
$$\bigcap_{j=N+1 }^{N+\widetilde N}\Big\{\frac{2 m\pi}{L_j}\Big\}_{m\in\N^*}= 
\Big\{\frac{2n_k\pi}{L_{N+1}}\Big\}_{k\in \N^*}.$$

\begin{assumptionA}
The numbers $\{L_j\}_{1\leq j\leq N+\widetilde N}$ are such that every ratio $\frac{L_{N+1}}{L_j}\in\Q$ for any 
$N+2\leq j\leq N+\widetilde N$. In addition, there exist $J\subseteq\N^*$ with $|J|=+\infty$ and $\{c_j\}_{N+1\leq 
j\leq N+\widetilde N}$ with $c_j\in[0,L_j]$ for any ${N+1\leq j\leq N+\widetilde N}$ such that
$$\sum_{j=1}^{N}\tan\Big(\frac{2 n_k \pi }{L_{N+1}}L_j\Big)=\sum_{j=N+1}^{N+\widetilde N}\tan\Big(\frac{2 n_k \pi 
}{L_{N+1}} c_j\Big),\ \ \ \  \ \ \ \ \ \forall k\in J.$$
In conclusion, the sequence $(\mu_k)_{k\in\N^*}=\Big(\frac{4 n_k^2 \pi^2 
}{L_{N+1}^2}\Big)_{k\in J}$ is such that $\mu_k\sim k^2$, {\it i.e.} there exist $C_1,C_2>0$ such that 
$$C_1k^2\leq \mu_k\leq C_2 k^2,\ \ \ \ \ \ \forall k\in\N^*.$$
\end{assumptionA}

When Assumptions A are satisfied, we define $\{\ffi_k\}_{k\in \N^*}$ such that
\begin{equation}\label{eigen}\begin{split}\begin{cases}
\ffi_1^j=\frac{1}{\sqrt{{(N+\widetilde N)L_j}}},\ \ \ \ \ &\forall\ 1\leq j\leq N+\widetilde N,\\
\ffi_k^1=\alpha_k\cos(\sqrt\mu_k x),\\
\ffi_k^j=\alpha_k\frac{\cos(\sqrt\mu_k L_1)}{\cos(\sqrt\mu_k L_j)}\cos(\sqrt\mu_k x),\ \ \ \  \ \ \ \ \ \ \ \ 
&\forall\ 2\leq j\leq N,\\
\ffi_k^j=\alpha_k\frac{\cos(\sqrt\mu_k L_1)}{\cos(\sqrt\mu_k c_j)}\cos(\sqrt\mu_k (x+c_j)\Big),\ \ \ \ \ \  \ \ \ 
\ 
\ \ \ \ &\forall\ N+1\leq j\leq N+\widetilde N\\
\end{cases}\end{split}\end{equation}
with $\alpha_k\in\C$ such that $\|\ffi_k\|_{L^2_p}=1$ and for every $k\in \N^*\setminus\{1\}$.

\begin{lemma}
Let $\Si$ be a star graph satisfying Assumptions A. The sequence $\{\ffi_k\}_{k\in \N^*}$ is an orthonormal system 
of $L_p^2$ made by eigenfunctions of the Laplacian $A$ corresponding to the eigenvalues $(\mu_k)_{k\in\N^*}$.
\end{lemma}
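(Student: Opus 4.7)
The plan is to verify four properties in succession: each component $\ffi_k^j$ solves $-\partial_x^2\ffi_k^j=\mu_k\ffi_k^j$ on the corresponding edge; each component on an infinite edge has the correct periodicity; $\ffi_k$ satisfies the vertex conditions defining $D(A)$ (Neumann at every external vertex $v_j$, continuity and Neumann-Kirchhoff at the internal vertex $v$); and finally $\{\ffi_k\}_{k\in\N^*}$ is orthonormal. The first property is immediate from the ansatz, since every component with $k\ge 2$ is a constant multiple of $\cos(\sqrt{\mu_k}(x+\text{shift}))$, while $\ffi_1$ is locally constant with $\mu_1=0$.

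For the periodicity on $e_j$ with $N+1\le j\le N+\widetilde N$, the fundamental period of $\cos(\sqrt{\mu_k}(x+c_j))$ equals $L_{N+1}/n_k$. Combining the definition $n_k=(k-1)\prod_{i=N+1}^{N+\widetilde N}l_iL_{N+1}/L_i$ with the integrality of each $l_iL_{N+1}/L_i$ from \eqref{multipliers} yields $n_kL_j/L_{N+1}\in\N^*$, so $L_j$ is an integer multiple of the period and membership in $L^2_p(e_j,\C)$ follows. For the vertex conditions, the Neumann requirement at each external $v_j$ is automatic since the coordinate on $e_j$ starts at $v_j=0$ and $\partial_x\cos(\sqrt{\mu_k}x)|_{x=0}=0$. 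The scaling constants appearing in \eqref{eigen} are tailored so that each $\ffi_k^j$, evaluated at the endpoint adjacent to $v$ (namely $x=L_j$ for $j\le N$ and $x=0$ for $j>N$), equals the common value $\alpha_k\cos(\sqrt{\mu_k}L_1)$, which yields continuity by construction. The Neumann-Kirchhoff balance then reduces after one differentiation to $\sum_{j=1}^N\tan(\sqrt{\mu_k}L_j)=\sum_{j=N+1}^{N+\widetilde N}\tan(\sqrt{\mu_k}c_j)$, and with $\sqrt{\mu_k}=2n_k\pi/L_{N+1}$ this is precisely the identity imposed by Assumptions A for $k\in J$.

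Once the first three properties hold, orthogonality of $\ffi_k$ and $\ffi_m$ for $k\neq m$ follows from the self-adjointness of $A$ together with the pairwise distinctness of the $\mu_k$, which is guaranteed because the $n_k$ form a strictly increasing sequence of natural numbers; the constants $\alpha_k$ are then chosen so that $\|\ffi_k\|_{L^2_p}=1$. The main difficulty here is conceptual rather than technical: the ansatz has to be designed so that the Kirchhoff flux balance and the simultaneous periodicity on all infinite edges are compatible on an infinite set of frequencies, and Assumptions A is exactly what guarantees that both constraints can be satisfied for infinitely many $k$. Once the assumptions are invoked, each remaining verification reduces to a direct trigonometric computation.
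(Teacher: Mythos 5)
Your proof is correct and follows essentially the same route as the paper's: both arguments reduce the claim to the periodicity constraint forcing $\sqrt{\mu_k}\in\big\{\frac{2n_k\pi}{L_{N+1}}\big\}_{k\in J}$, the Neumann conditions at the external vertices, continuity at $v$ via the scaling ratios in \eqref{eigen}, and the Neumann--Kirchhoff flux balance, which after one differentiation is exactly the tangent identity of Assumptions A. The only difference is presentational --- the paper derives the ansatz from the general form of an eigenfunction (cosines on the finite edges by the Neumann conditions, then setting the sine coefficients $\beta_j=0$ on the infinite edges), whereas you verify the given formulas directly --- but the substance is identical.
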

\begin{proof}
We notice that any eigenfunction $f=(f^1,...,f^{N+\widetilde N})$ of $A$ corresponding to an eigenvalue $\mu$ has 
to be such that $f^j$ has period $\frac{2\pi}{L_j}$ for every $N+1\leq j\leq N+\widetilde N$. Thus, 
$$\sqrt\mu\in\bigcap_{j=N+1 }^{N+\widetilde N}\Big\{\frac{2 m\pi}{L_j}\Big\}_{m\in\N^*}\supseteq 
\Big\{\frac{2n_k\pi}{L_{N+1}}\Big\}_{k\in J}.$$
Thanks to the Neumann boundary conditions in $\{v_j\}_{j\leq N}$ and to the periodicity conditions in 
$\{e_j\}_{N+1\leq j\leq N+\widetilde N}$, there exist $c_j\in[0,L_j]$ for any ${N+1\leq j\leq N+\widetilde N}$ 
such that 
\begin{equation*}\begin{split}\begin{cases}
f^j=\alpha_j\cos(\sqrt\mu x),\ \ \ \  \ \ \ \ &1\leq j\leq N,\\   
f^j=\alpha_j\cos(\sqrt\mu (x+c_j))+\beta_j\sin(\sqrt\mu (x+c_j)),\ \ \ \ \  \ \ \ \ &N+1\leq j\leq N+\widetilde 
N,\\
\end{cases}\end{split}\end{equation*} 
with suitable $\{\alpha_j\}_{j\neq N+\widetilde N},\{\beta_j\}_{N+1\leq j\neq N+\widetilde N}\subset\C$. 
The Neumann-Kirchhoff boundary conditions in $v$ yield
\begin{equation*}\begin{split}\begin{cases}\alpha_1\cos(\sqrt\mu L_1)=\alpha_j\cos(\sqrt\mu L_j),\ \ \ \ \ \ \ \ \ 
&\forall 2\leq j\leq N,\\
\alpha_1\cos(\sqrt\mu L_1)=\alpha_j\cos(c_j)+\beta_j\sin(c_j),\ \ \ \ \ \ \ \ \ &\forall N+1\leq j\leq 
N+\widetilde 
N.\\\end{cases}\end{split}\end{equation*}
When $\beta_j=0$ for every $N+1\leq j\leq N+\widetilde N$, the last identities implies
\begin{equation*}\begin{split}\begin{cases}
f^1=\alpha_j\cos(\sqrt\mu x),\\
f^j=\alpha_j\frac{\cos(\sqrt\mu L_1)}{\cos(\sqrt\mu L_j)}\cos(\sqrt\mu x),\ \ \ \  \ \ \ \ \ \ \ \ &\forall 2\leq 
j\leq N,\\
f^j=\alpha_j\frac{\cos(\sqrt\mu L_1)}{\cos(\sqrt\mu c_j)}\cos(\sqrt\mu (x+c_j)),\ \ \ \  \ \ \ \ \ \ \ \ &\forall 
2\leq j\leq N.\\
\end{cases}\end{split}\end{equation*}
We recall that the numbers $c_j\in[0,L_j]$ for every ${N+1\leq j\leq N+\widetilde N}$ are such that
$$\sum_{j=1}^{N}\tan(\sqrt\mu L_j)=\sum_{j=N+1}^{N+\widetilde N}\tan(\sqrt\mu c_j),\ \ \ \ \ \ \ \forall 
\sqrt\mu\in \Big\{\frac{2n_k\pi}{L_{N+1}}\Big\}_{k\in J}.$$
Thus, the second condition characterizing the Neumann-Kirchhoff boundary conditions is verified when $\beta_j=0$. 
As a 
consequence, $\{\ffi_k\}_{k\in \N^*}$ is composed by eigenfunctions of $A$. The 
orthonormality follows from the fact that $\big\{\cos\big(\frac{2\pi k}{L}x\big)\big\}_{k\in\N^*}$ is an 
orthogonal family in $L^2([0,L],\C)$ with $L>0$.
\qedhere
\end{proof}

Equivalently to Proposition \ref{lauraT}, we have the following well-posedness result.

\begin{prop}\label{lauraS}
Let the star graph $\Si$ satisfy Assumptions A. Let $B$ be a bounded symmetric operator in $L^2_p$ such that
$$B:\Hi(\upvarphi)\longrightarrow \Hi(\upvarphi),\ \ \ \  \ \ \  \ B:H^2_{\Si}(\upvarphi)\longrightarrow 
H^2_{\Si}(\upvarphi),\ \ \ \  \ \ \  \ B:H_{\Si}^{3}(\upvarphi)\longrightarrow H_p^{3}\cap H^2_{\Si}(\upvarphi).$$
	Let $\psi_0\in H^{3}_{\Si}(\upvarphi)$ and $u\in L^2((0,T),\R)$. There
	exists a unique mild solution $\psi\in C^0([0,T],H^3_{\Si}(\upvarphi))$ of (\ref{mainS}) with 
initial data $\psi_0$. The flow of \eqref{mainS} 
on 
$\Hi(\upvarphi)$ can be extended to a unitary flow $\G_t^{u}$ with respect to the $L^2_p-$norm such 
that $\G_t^{u}\psi_0=\psi(t)$ for any solution $\psi$ of \eqref{mainS} with initial data $\psi_0\in 
\Hi(\upvarphi)$.
\end{prop}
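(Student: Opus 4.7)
The plan is to reproduce the three-step structure of the proof of Proposition~\ref{lauraT}, adjusting the regularity index from $4$ to $3$ and replacing the tadpole-specific symmetry analysis with the star-graph eigenfunction structure in~\eqref{eigen}. For the unitary flow on $\Hi(\upvarphi)$, the hypothesis $B:\Hi(\upvarphi)\to\Hi(\upvarphi)$ together with self-adjointness of $A$ on $\Hi(\upvarphi)$ makes \cite[Theorem~2.5]{ball} directly applicable, yielding a unique $C^0([0,T],\Hi(\upvarphi))$-solution for every $\psi_0\in\Hi(\upvarphi)$; conservation of $\|\psi(t)\|_{L^2_p}$ is first checked for $u\in C^0((0,T),\R)$ by differentiating in time and using the symmetry of $A$ and $B$, then extended to $u\in L^2((0,T),\R)$ by density, giving the unitary flow $\G_T^u$. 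Unlike the tadpole case, no preliminary invariance check is needed since invariance is already built into the hypotheses on $B$.

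The main step, and the main obstacle, is the regularity of the Duhamel term $G(t)=\int_0^t e^{i\Delta(t-s)}u(s)B\psi(s)\,ds$ in $H^3_\Si(\upvarphi)$. Setting $f(s):=u(s)B\psi(s)$, the hypothesis places $f(s)$ in $H^3_p\cap H^2_\Si(\upvarphi)\subseteq D(A)$, so a first integration by parts using $-\Delta\ffi_k=\mu_k\ffi_k$ gives $\la\ffi_k,f(s)\ra_{L^2_p}=\mu_k^{-1}\la\ffi_k,-\Delta f(s)\ra_{L^2_p}$ with no boundary contribution, because $f(s)$ satisfies both the Neumann conditions at the external vertices and the Neumann-Kirchhoff condition at $v$. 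A second edge-wise integration by parts against the cosines $\alpha_k\cos(\sqrt{\mu_k}(\,\cdot\,+c_j))$ appearing in~\eqref{eigen} produces a further factor $\mu_k^{-1/2}\sim k^{-1}$ (using $\mu_k\leq Ck^2$ from Assumptions~A), together with endpoint contributions $b_k(s)$ of the form $\alpha_k[\sin(\sqrt{\mu_k}(x+c_j))\,(-\Delta f)^j(x)]_0^{L_j}$ and a remainder $r_k(s)$ involving $\dd_x(-\Delta f)^j$. The estimate then closes via
\[
\|G(t)\|_{(3)}\leq C_1\Big\|\int_0^t e^{i\mu_k s}b_k(s)\,ds\Big\|_{\ell^2}+\sqrt{t}\,C_2\,\|f\|_{L^2((0,t),H^3)},
\]
where the first term is controlled by an Ingham-type inequality in the spirit of \cite[Proposition~B.6]{mio3} (the required uniform spectral gap for $\{\mu_k\}$ coming from the arithmetic structure of the indices $n_k$ fixed by Assumptions~A) and the second by Cauchy-Schwarz. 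Together with dominated convergence, this yields $G\in C^0([0,T],H^3_\Si(\upvarphi))$.

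As in the proof of Proposition~\ref{lauraT}, the map $F(\psi)(t):=e^{i\Delta t}\psi_0-i\int_0^t e^{i\Delta(t-s)}u(s)B\psi(s)\,ds$ is then a contraction on $C^0([0,T'],H^3_\Si(\upvarphi))$ for $T'$ small enough depending on $\|u\|_{L^2((0,T'),\R)}$ and $\iii B\iii_{\LL(H^3_\Si(\upvarphi),H^3_p)}$, and patching such small-time solutions over a sufficiently fine partition of $[0,T]$ produces the mild solution on the full interval; the unitarity of $\G_T^u$ was already established in the first step. The principal difficulty lies in the boundary-term analysis above: unlike the tadpole, each of the $N+\widetilde N$ edges contributes its own endpoint data weighted by the edge-dependent amplitude factors $\cos(\sqrt{\mu_k}L_1)/\cos(\sqrt{\mu_k}L_j)$ and $\cos(\sqrt{\mu_k}L_1)/\cos(\sqrt{\mu_k}c_j)$ from~\eqref{eigen}, and one must verify that these coefficients remain uniformly bounded along the subsequence $\{n_k\}_{k\in J}$ selected in Assumptions~A before the Ingham-type estimate can be applied.
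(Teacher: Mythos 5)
Your proposal follows essentially the same route as the paper's proof: unitarity on $\Hi(\upvarphi)$ via \cite[Theorem~2.5]{ball} and a density argument, regularity of the Duhamel term by moving three derivatives from $f=u\,B\psi$ onto the eigenfunctions edge by edge (yielding boundary terms in $\dd_x^2 f^j$ at the vertices handled by the Ingham-type estimate of \cite[Proposition~B.6]{mio3} and a bulk term in $\dd_x^3 f^j$ controlled in $L^2$ via the orthonormal family $\{\mu_k^{-1/2}\dd_x\ffi_k\}$), followed by dominated convergence and the Banach fixed point argument on a fine partition of $[0,T]$. The only cosmetic difference is that you flag the uniform boundedness of the amplitude ratios in \eqref{eigen} explicitly, which the paper leaves implicit in its use of $\mu_k\sim k^2$ and the normalization $\|\ffi_k\|_{L^2_p}=1$.
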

\begin{proof} 
The proof follows from the same arguments adopted in Proposition \ref{lauraT}. First, we notice that $A$ is 
self-adjoint in $\Hi(\upvarphi)$ and $B$ is bounded symmetric since $B:\Hi(\upvarphi)\rightarrow\Hi(\upvarphi)$. 
Second, we can define an unitary flow for the dynamics of the equation in $\Hi(\upvarphi)$ as in the proof of the 
mentioned proposition.

\smallskip
\needspace{3\baselineskip}

	\noindent
	{\bf 1) Regularity of the integral term in the mild solution.} Let $\psi\in C^0([0,T],H_{\Si}^3(\upvarphi))$ 
with $T>0$. 
We notice $B\psi(s) \in H^{3}_p\cap H_{\Si}^2(\upvarphi)$ for almost every $s\in (0,t)$ and $t\in(0,T)$. Let 
$G(t)=\int_0^t 
e^{i\Delta(t-s)}u(s)B\psi(s,x)ds$ so that
	\begin{equation*}\begin{split}
	\|G(t)\|_{(3)}&=\Big(\sum_{k\in\N^*}\Big|k^3\int_0^t e^{i\mu_ks}\la\ffi_k,u(s)B\psi(s,\cdot)\ra_{L^2_p} 
ds\Big|^2\Big)^\frac{1}{2}.\\
	\end{split}\end{equation*}
	Let $f(s,\cdot):=u(s)B\psi(s,\cdot)$. We define $\dd_x f(s)=(\dd_x f^1(s),...,\dd_x f^N(s))$ the derivative of 
$f(s)$. Thanks to the validity of 
Assumptions A, we have $\sqrt\mu_k\sim k$ and there exists $C_1>0$ such that, for every 
$k\in\N^*\setminus\{1\},$
	\begin{equation*}\begin{split}
	&\left|k^3\int_0^t e^{i\mu_ks}\la\ffi_k,f(s)\ra_{L^2_p} ds\right|
	\leq\frac{C_1}{k}\sum_{j=1}^{N+\widetilde N}\left(
	\left|\dd_{x}\ffi_k^j(L_j)\int_0^t e^{i\mu_ks}\dd_{x}^2f^j(s,L_j)ds\right|\right.\\
	&+\left.\left|\dd_{x}\ffi_k^j(0)\int_0^t e^{i\mu_ks}\dd_{x}^2f^j(s,0)ds\right|+ \left|\int_0^t 
e^{i\mu_ks}\int_{0}^{L_j}\dd_{x}\ffi_k^j(y)\dd_{x}^3f^j(s,y)dyds\right|\right).\\
	\end{split}\end{equation*}
	The argument of \cite[Remark\ 3.4]{mio5} yields that 
$\dd_{x}^3f(s,\cdot)\in\overline{span\big\{\mu_k^{-1/2}\dd_{x}\ffi_k:\ k\in\N^*\big\}}^{ L^2}$ for almost every 
$s\in (0,t)$ and $t\in(0,T)$, and there exists $C_2>0$ such that
	\begin{equation*}\begin{split}
	\|G(t)\|_{(3)}\leq &C_2\sum_{j=1}^{N+\widetilde N}\Big(\Big\|\int_0^t\dd_{x}^2 f^j(s,0)e^{i \mu_{(\cdot)}s}ds 
\Big\|_{\ell^2}+\Big\|\int_0^t\dd_{x}^2 f^j(s,L_j)e^{i \mu_{(\cdot)}s}ds \Big\|_{\ell^2}\Big)\\
	&+C_2\Big\|\int_0^t\big\la {\mu_{(\cdot)}^{-1/2}}\dd_x\ffi_{(\cdot)}(s),\dd_{x}^3 f(s)\big\ra_{L^2_p} e^{i 
\mu_{(\cdot)}s}ds\Big\|_{\ell^2}.\\
	\end{split}\end{equation*}
	From \cite[Proposition\ B.6]{mio3}, there exists $C_3(t)>0$ uniformly bounded for $t$ in bounded intervals 
such 
that $\|G\|_{(3)}\leq C_3(t)\|f(\cdot,\cdot)\|_{L^2((0,t),H^3_p)}.$ The provided upper bounds are uniform and the 
Dominated Convergence Theorem leads to $G\in C^0([0,T], H^3_{\Si}(\upvarphi))$.

	\smallskip
	\noindent
	{\bf 2) Conclusion.} We proceed as in the second point of the proof of Proposition \ref{lauraT}. Let 
$\psi_0\in 
H^{3}_{\Si}(\upvarphi)$. We consider the map $F:\psi \in C^0([0,T],H^{3}_{\Si}(\upvarphi))\mapsto\phi\in 
C^0([0,T],H^{3}_{\Si}(\upvarphi)) $ with
	$$\phi(t)=F(\psi)(t)=e^{i\Delta t}\psi_0-\int_0^te^{i\Delta(t-s)}u(s) B\psi(s)ds,\ \ \ \ \ \forall t\in 
[0,T].$$
	Let $L^\infty(H^{3}_\Si(\upvarphi)):=L^\infty((0,T),H^{3}_\Si(\upvarphi))$. For every $\psi_1,\psi_2\in 
C^0([0,T],H^{3}_{\Si}(\upvarphi))$, thanks to {\bf 1)}, there exists $C(t)>0$ uniformly bounded for $t$ lying on 
bounded intervals such that
	\begin{equation*}\begin{split}
	&\|F(\psi_1)-F(\psi_2)\|_{L^\infty(H^{3}_\Si(\upvarphi))}\leq C(T)\|u\|_{L^2((0,T),\R)}\iii 
B\iii_{\LL(H^{3}_{\Si}(\upvarphi),H_p^{3})} \|\psi_1-\psi_2\|_{L^\infty(H^{3}_\Si(\upvarphi))}.\\
	\end{split}\end{equation*}
	The Banach Fixed Point Theorem leads to the claim as in the mentioned proof. \qedhere
\end{proof}

By keeping in mind the definition of global exact controllability provided in Definition \ref{exact}, we present 
the following result.

\begin{teorema}\label{globalestella}
Let the hypotheses of Proposition $\ref{lauraS}$ be satisfied. We also assume that
\begin{enumerate}
		\item there exists $C>0$ such that $|\la\ffi_k,B\ffi_1\ra_{L^2_p}|\geq\frac{C}{k^{3}}$ for every 
$k\in\N^*$;
		\item for every $(j,k),(l,m)\in \N^2$ such that $(j,k)\neq (l,m)$, $j<k$, $l<m$ and 
$\mu_j-\mu_k=\mu_j-\mu_m,$ it holds 
$$\la\ffi_j,B\ffi_j\ra_{L^2_p}-\la\ffi_k,B\ffi_k\ra_{L^2_p}-\la\ffi_l,B\ffi_l\ra_{L^2_p}+\la\ffi_m,B\ffi_m\ra_{
L^2_p}\neq 0.$$
	\end{enumerate}
	The (\ref{mainS}) is globally exactly controllable in $H^3_\Si(\upvarphi)$.
	\end{teorema}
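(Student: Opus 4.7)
The plan is to follow the three-step structure of the proof of Theorem \ref{globalegirino}, adapted to the weaker spectral information available on the star graph. Since $\mu_1=0$, the constant eigenfunction $\ffi_1$ is fixed by the free flow, and for $\epsilon,T>0$ I would introduce
$$O_{\epsilon,T}^{3} := \{\psi \in H^3_\Si(\upvarphi) \, : \, \|\psi\|_{L^2_p} = 1,\ \|\psi - \ffi_1\|_{(3)} < \epsilon\}$$
and aim for local exact controllability, that is, the surjectivity of the end-point map $\alpha(u) := (\la \ffi_k(T), \G_T^u \ffi_1 \ra_{L^2_p})_{k \in \N^*}$ onto $O_{\epsilon,T}^{3}$. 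By the Generalized Inverse Function Theorem \cite[Theorem 1, p. 240]{Inv}, it suffices to show that the Fr\'echet derivative $\gamma(v)_k = -iB_{k,1}\int_0^T v(\tau) e^{i(\mu_k - \mu_1)\tau}d\tau$ is surjective onto $T_\updelta Q \subset h^3$, which in turn amounts to the solvability of the moment problem
$$\frac{x_k}{B_{k,1}} = -i\int_0^T u(\tau)e^{i(\mu_k - \mu_1)\tau}d\tau, \qquad \{x_k\}_{k\in\N^*}\in T_\updelta Q.$$

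The step I expect to be the main obstacle is precisely the solvability of this moment problem. Whereas the tadpole case of Theorem \ref{globalegirino} benefits from $\inf_k|\mu_{k+1}-\mu_k|=4\pi^2>0$, here Assumptions A only yield the upper bound $\mu_k\leq Ck^2$, so consecutive eigenvalues may cluster and \cite[Proposition B.5]{mio3} no longer applies directly. Hypothesis (1) still ensures $\{x_k/B_{k,1}\}\in\ell^2$ with $x_1/B_{1,1}\in\R$, so the right-hand sides lie in the expected target space; Hypothesis (2) is precisely the four-term non-resonance condition needed to invoke a generalized Ingham/Haraux-type inequality in which the uniform-gap assumption is replaced by the absence of quadruple resonances, and this inequality yields solvability of the moment problem in $L^2((0,T),\R)$ for $T$ sufficiently large.

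Once local exact controllability at $\ffi_1$ is in place, global exact controllability follows exactly as in the tadpole proof. Given $\psi_1, \psi_2 \in H^3_\Si(\upvarphi)$ on the same $L^2_p$-sphere of radius $p$, an approximation argument analogous to the one relying on Remark \ref{approxT} would steer $p^{-1}\psi_1$ and $p^{-1}\psi_2$ into $O_{\epsilon,T}^3$; one then concatenates these two preliminary controls with the local control at $\ffi_1$ (and its time-reversal) into a single $\widetilde u \in L^2((0,\widetilde T),\R)$ realizing $\G^{\widetilde u}_{\widetilde T}\psi_1 = \psi_2$. Finally, energetic controllability with respect to $\{\mu_k\}_{k\in\N^*}$ is immediate, since each $\ffi_k$ belongs to $H^3_\Si(\upvarphi)$ and hence the transitions $\ffi_m\mapsto\ffi_n$ are covered by the global exact controllability just established.
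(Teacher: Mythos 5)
Your skeleton --- local surjectivity of the end-point map via the Generalized Inverse Function Theorem, reduction to a moment problem, then concatenation with an approximate-controllability step into a single control realizing $\G^{\widetilde u}_{\widetilde T}\psi_1=\psi_2$ --- is the same as the paper's. But you locate the difficulty in the wrong place, and this produces two concrete gaps. First, the uniform spectral gap does \emph{not} fail on the star graph. Under Assumptions A the admissible frequencies satisfy $\sqrt{\mu_k}\in\big\{2n_k\pi/L_{N+1}\big\}_{k\in J}$ with the $n_k$ distinct nonnegative integers, so after reordering $|\mu_{k+1}-\mu_k|\geq \pi^2\big(\max\{L_j^2:\ N+1\leq j\leq N+\widetilde N\}\big)^{-1}>0$; the bound $\mu_k\leq Ck^2$ is an upper bound on growth, not a statement that consecutive eigenvalues cluster. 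The paper therefore solves the moment problem exactly as in the tadpole case: hypothesis (1) gives $\{x_kB_{k,1}^{-1}\}\in\ell^2$ and \cite[Proposition~B.5]{mio3} applies directly thanks to the gap. No generalized Ingham/Haraux inequality with a quadruple non-resonance condition is invoked anywhere, and you neither state nor prove one, so as written your local-controllability step rests on an unjustified (and unnecessary) analytic input.

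Second, you spend hypothesis (2) on the moment problem, where it plays no role, and leave it unused in the one step that actually needs it: the global approximate controllability that steers $p^{-1}\psi_1$ and $p^{-1}\psi_2$ into $O^{3}_{\epsilon,T}$. In the paper this is Remark~\ref{approxS}, where condition (2) is precisely what guarantees that $(A+u_0B,B)$ admits a non-degenerate chain of connectedness (the analogue of the explicit computation done for the tadpole in Remark~\ref{approxT}). Your phrase ``an approximation argument analogous to the one relying on Remark~\ref{approxT}'' glosses over the only point at which the second hypothesis of the theorem enters. The final step (energetic controllability from $\ffi_k\in H^3_{\Si}(\upvarphi)$ for every $k\in\N^*$) is fine and matches the paper.
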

\begin{proof}{\bf 1) Local exact controllability.}  
The statement follows as Theorem \ref{globalegirino}. First, for $\epsilon,T>0$, the local exact controllability 
in 
$O_{\epsilon,T}^{3}:=\big\{\psi\in H_{\Si}^{3}(\upvarphi)\big|\ \|\psi\|_{L^2_p}=1,\ \|\psi 
-\ffi_1(T)\|_{(3)}<\epsilon\big\}$ 
with $\ffi_1(T)=e^{-i\mu_1T}\ffi_1$ is ensured by proving the surjectivity of the map
	$$\gamma:L^2((0,T),\R)\longrightarrow T_{\updelta}Q=\{{\bf x}:=\{x_k\}_{k\in\N^*}\in h^3(\C)\ |\ ix_1\in\R\},$$
	the sequence of elements $\gamma_{k}(v):=
	-i\int_{0}^Tv(\tau)e^{i(\mu_k-\mu_1)s}d\tau B_{k,1}$ with $B_{k,1}:=\la\ffi_k,B\ffi_1\ra_{L^2_p}$ 
for $k\in\N^*$. 
The surjectivity of $\gamma$ corresponds to the solvability of the moments problem
	\begin{equation}\begin{split}\label{mome1bis}
	{x_{k}}B_{k,1}^{-1}=-i\int_{0}^Tu(\tau)e^{i(\mu_k-\mu_1)\tau}d\tau,\ \ \ \ \ \ \ \ \ \ \forall 
\{x_{k}\}_{k\in\N^*}\in T_{\updelta}Q\subset h^3. 
	\\
	\end{split}\end{equation} 
	As there exists $C>0$ such that $|\la\ffi_k,B\ffi_1\ra_{L^2_p}|\geq\frac{C}{k^3}$ for every $k\in\N^*$, we 
have 
$\big\{x_k B_{k,1}^{-1}\big\}_{k\in\N^*}\in \ell^2$ and $i{x_{1}}B_{k,1}^{-1}\in\R.$ The solvability of 
$(\ref{mome1bis})$ is guaranteed by \cite[Proposition\ B.5]{mio3} since 
	$$\inf_{k\in\N^*}|\mu_{k+1}-\mu_k|\geq {\pi^2}\min\{L_j^{-2}:\ {N+1\leq j\leq N+\widetilde N}\}>0.$$

	\smallskip
	
	\noindent
	{\bf 2) Global exact controllability.}  The global exact controllability in $H^3_\Si(\upvarphi)$ is ensured as 
in the second point of the proof of Theorem \ref{globalegirino} by considering Remark \ref{approxS} instead of 
Remark \ref{approxT}. \qedhere
\end{proof}
\begin{oss}
Let $\{L_j\}_{1\leq j\leq N+\widetilde N}$ be such that $\frac{L_{N+1}}{L_j}\in\Q\ $  for any $1\leq 
j\leq N+\widetilde N$. We notice that Assumptions A are satisfied with $c_j=0$ for every $N+1\leq j\leq 
N+\widetilde N$. Indeed, let $l_j$ be the numbers defined in \eqref{multipliers}. The sequence
$$(\mu_k)_{k\in\N^*}:=\Big\{\frac{4\widetilde n_k^2\pi^2}{L_{N+1}^2}\Big\}_{k\in\N^*}\ \ \ \ \ \  \text{with }\ \ 
\ \ \widetilde n_k:=(k-1)\prod_{j=1}^{N+\widetilde N}l_j\frac{L_{N+1}}{L_j}$$ is composed by eigenvalues. The 
corresponding eigenfunctions $(\ffi_k)_{k\in\N^*}$ are provided in \eqref{eigen}. In this 
framework, $$\mu_k\sim k^2,\ \ \ \ \  \ \ \tan(\mu_kL_j)=0\ \ \ \ \ \  \ \ \forall k\in\N^*,\ \ \ 1\leq j\leq N.$$ 
Thus, the validity of 
Assumptions A is ensured with $c_j=0$ for every $N+1\leq j\leq N+\widetilde N$.
\end{oss}

\begin{oss}
Let $\Si$ satisfy Assumptions A. We consider $\{L_j\}_{1\leq j\leq N+\widetilde N}$ being such that 
$\frac{L_{N+1}}{L_j}\in\Q\ $  for any $1\leq 
j\leq N+\widetilde N$ so that the previous remark is verified. Let $\widetilde 
B:\psi\longmapsto (V^1\psi^1,...,V^{N+\widetilde N}\psi^{N+\widetilde N})$ be such 
that
\begin{equation*}\begin{split}\begin{cases}V^j(x)=x^2(x-L_j)^2,\ \ \ \ \ &\forall 1\leq j\leq N,\\
V^j(x)=\sum_{n\in\N}(x-nL_j)^2(x-(n+1)L_j)^2\upchi_{[nL_j,(n+1)L_j]}(x), &\forall N+1\leq j\leq N+\widetilde 
N.\end{cases}\end{split}\end{equation*}
If we consider the operator $B$ on $L^2_p$ such that $B\psi=\sum_{j=1}^{+\infty}\ffi_j\la\ffi_j,\widetilde 
B\psi\ra_{L_p^2},$ then the corresponding \eqref{mainS} is well-posed and globally exactly controllable in the 
space $H^4_\Si(\upvarphi)$. The result is proved by using the techniques leading to Proposition 
\ref{lauraT}, Proposition \ref{lauraS}, Theorem \ref{globalegirino} and Theorem \ref{globalestella}. In the next 
section, we ensure in the same way the well-posedness and 
the global exact controllability in $H^s_\Gi$ for suitable $s\geq 3$ with 
abstract $\Gi$ and $B$. 
\end{oss}
\section{Generic graphs} \label{sec4}
In this section, we study the controllability of the \eqref{mainx1} for a general graph $\Gi$ made by $N$ finite 
edges $\{e_j\}_{1\leq j\leq N}$ of lengths $\{L_j\}_{1\leq j\leq N}$, $\widetilde N$ half-lines $\{e_j\}_{N+1\leq 
j\leq N+\widetilde N}$ and $M$ vertices $\{v_j\}_{1\leq j\leq M}$. For every vertex $v$, we denote 
$N(v):=\big\{l \in\{1,...,N\}\ |\ v\in e_l\big\}$. We respectively call $V_e$ and $V_i$ the external and the 
internal vertices of $\Gi$, {\it i.e.}
$$V_e:=\big\{v\in\{v_j\}_{1\leq  j\leq M}\ |\ \exists ! e\in\{e_j\}_{1\leq j\leq N}: v\in e\big\},\ \ \ \ \ 
V_i:=\{v_j\}_{1\leq  j\leq M}\setminus V_e.$$

We consider the bilinear Schr\"odinger equation \eqref{mainx1} in $L^2_p$ for a general graph $\Gi$. The Laplacian 
$A=-\Delta$ is equipped with {\it Dirichlet} or {\it Neumann} boundary conditions in the external vertices, and 
the internal vertices are equipped with {\it Neumann-Kirchhoff} boundary 
conditions. More precisely, a vertex $v\in V_i$ is said to be equipped with Neumann-Kirchhoff boundary 
conditions when every 
function $f=(f^1,...,f^N)\in D(A)$ is continuous in $v$ and
\begin{equation*}\begin{split}
	\sum_{l\in N(v)}\frac{\dd f^l}{\dd x}(v)=0,
	\end{split}
	\end{equation*}
when the derivatives are assumed to be taken in the directions away from the vertex. We 
respectively call ($\Di$), ($\NN$) and ($\NN\KK$) the {\it Dirichlet}, {\it Neumann} and  {\it Neumann-Kirchhoff}
boundary conditions characterizing $D(A)$.

\smallskip

We say that a vertex $v$ of $\Gi$ is equipped with one of the previous boundaries, when each $f\in D(A)$ satisfies 
it in $v$. We say that $\Gi$ is equipped with ($\Di$) (or ($\NN$)) when, for every $f\in D(A)$, the function $f$ 
satisfies ($\Di$) (or ($\NN$)) in every $v\in V_e$ and verifies ($\NN\KK$) in every $v\in V_i$. In addition, the 
graph $\Gi$ is said to be equipped with ($\Di$/$\NN$) when, for every $f\in D(A)$ and $v\in V_e$, the function $f$ 
satisfies 
($\Di$) or ($\NN$) in $v$, and $f$ verifies ($\NN\KK$) in every $v\in V_i$. 

\smallskip

Let $\upvarphi:=\{\ffi_k\}_{k\in\N^*}$ be an orthonormal system of $L^2_p$ made by some eigenfunctions of $A$ and 
let 
$\{\mu_{k}\}_{k\in\N^*}$ be the corresponding eigenvalues. Let $[r]$ be the entire part of $r\in\R$. We define 
$\Gi(\upvarphi)=\bigcup_{k\in\N^*}supp(\ffi_k)$ and we respectively denote by $V_e(\upvarphi)$ and 
$V_i(\upvarphi)$ the external and internal vertices of $\Gi(\upvarphi)$. For $s>0$, we introduce the 
space
\begin{equation*}
		\begin{split}
		H^s_{\NN\KK}(\upvarphi):=\Big\{&\psi\in \Hi(\upvarphi)\cap H^s_p\ |\ \dd_x^{2n}\psi\text{ is continuous in 
}v,\ 
 \forall n\in\N,\ n<\big[({s+1})/{2}\big],\ \forall v\in V_i;\\
& \sum_{j\in N(v)}\dd_{x}^{2n+1}\psi^j(v)=0,\ \forall n\in\N,\ n<\big[{s}/{2}\big],\  \forall v\in V_i\Big\}.\\
		\end{split}
\end{equation*}

\begin{osss}\label{subgraph}
We notice the following facts.
\begin{itemize}
 \item $\Gi(\upvarphi)$ is a finite or infinite sub-graph of $\Gi$ whose structure depends on the orthonormal 
family $\upvarphi$.
 \item The functions belonging to $\Hi(\upvarphi)$, $H^s_\Gi(\upvarphi)$ and $H^s_{\NN\KK}(\upvarphi)$ can be 
considered as functions with domain $\Gi(\upvarphi)$. 
 \item $\Gi(\upvarphi)$ shares some external and internal vertices with $\Gi$. Its new external vertices are 
$V_e(\upvarphi)\setminus V_e$. 
 \item Let $L^2_p(\Gi(\upvarphi),\C)$ be the 
space defined from the identities \eqref{spazio} by considering the graph 
$\Gi(\upvarphi)$. Each $\ffi_k|_{\Gi(\upvarphi)}$ is an eigenfunction of a Laplacian $\widetilde A$ defined on 
$L^2_p(\Gi(\upvarphi),\C)$ as follows. The domain $D(\widetilde A)$ is composed by the restriction in 
$\Gi(\upvarphi)$ of those $H^2_p$ functions satisfying $(\Di)$ in the vertices $V_e(\upvarphi)\setminus 
V_e$ and verifying the same boundary conditions defining $D(A)$ in the vertices 
$V_i(\upvarphi)\cup \big(V_e(\upvarphi)\cap V_e\big)$.
 \end{itemize}
 \end{osss}
From now on, when we claim that the vertices of $\Gi(\upvarphi)$ are equipped with any type of boundary 
conditions, 
this is done in the meaning of Remark \ref{subgraph}. Let $\eta>0,$ $a\geq 0$ and $$I:=\{(j,k)\in(\N^*)^2:j< 
k\}.$$		

\needspace{3\baselineskip}
\begin{assumptionI}[$\upvarphi,\eta$] Let $B$ be a bounded and symmetric operator in $L^2_p$ satisfying the 
following conditions.
	\begin{enumerate}
		\item There exists $C>0$ such that $|\la\ffi_k,B\ffi_1\ra_{L^2_p}|\geq\frac{C}{k^{2+\eta}}$ for every 
$k\in\N^*$.
		\item For every $(j,k),(l,m)\in I$ such that $(j,k)\neq(l,m)$ and $\mu_j-\mu_k=\mu_j-\mu_m,$ it holds 
$\la\ffi_j,B\ffi_j\ra_{L^2_p}-\la\ffi_k,B\ffi_k\ra_{L^2_p}-\la\ffi_j,B\ffi_j\ra_{L^2_p}+\la\ffi_m,B\ffi_m\ra_{L^2_p
}\neq 0.$
	\end{enumerate}
	
\end{assumptionI}

\begin{assumptionII}[$\upvarphi,\eta,a$] We have  $B:\Hi(\upvarphi)\rightarrow\Hi(\upvarphi)$ and 
$Ran(B|_{H^2_{\Gi}(\upvarphi)})\subseteq H^2_{\Gi}(\upvarphi).$ In addition, one of the following points is 
satisfied.
	\begin{enumerate}
		
		\item When $\Gi(\upvarphi)$ is equipped with ($\Di$/$\NN$) and $a+\eta\in(0, 3/2)$, there exists 
		$d\in[\max\{a+\eta,1\},3/2)$ such that $$Ran(B|_{H_{\Gi}^{2+d}(\upvarphi)})\subseteq H^{2+d}_p\cap 
H^2_{\Gi}(\upvarphi).$$

		\item When $\Gi(\upvarphi)$ is equipped with ($\NN$) and $a+\eta\in(0, 7/2)$, there exist 
$d\in[\max\{a+\eta,2\},7/2)$ and $d_1\in(d,7/2)$ such that $$Ran(B|_{ H^{d_1}_{\NN\KK}(\upvarphi)})\subseteq 
H^{d_1}_{\NN\KK}(\upvarphi),\ \ \ \ \ Ran(B|_{H_{\Gi}^{2+d}(\upvarphi)})\subseteq H^{2+d}_p\cap 
H^{1+d}_{\NN\KK}(\upvarphi)\cap H^2_{\Gi}(\upvarphi).$$

		\item When $\Gi$ is equipped with ($\Di$) and $a+\eta\in(0, 5/2)$, there exists 
$d\in[\max\{a+\eta,1\},5/2)$ such that $$Ran(B|_{H_{\Gi}^{2+d}(\upvarphi)})\subseteq H^{2+d}_p\cap 
H^{1+d}_{\NN\KK}(\upvarphi)\cap H^2_{\Gi}(\upvarphi).$$ If $d\geq 2$, then there exists $d_1\in(d,5/2)$ such 
that $Ran(B|_{H^{d_1}_p\cap \Hi(\upvarphi)})\subseteq H^{d_1}_p\cap \Hi(\upvarphi).$

	\end{enumerate}
\end{assumptionII}

From now on, we omit the terms $\upvarphi,$ $\eta$ and $a$ from the notations of Assumptions I and Assumptions II 
when their are not relevant.

We are finally ready to present some {\it interpolation properties} for the spaces $H^s_{\Gi}(\upvarphi)$ with 
$s>0$. 
\begin{prop}\label{bor}Let $\upvarphi:=\{\ffi_k\}_{k\in\N^*}$ be an orthonormal system of $L^2_p$ made by 
eigenfunctions of $A$.
	
	\smallskip
	\noindent
	{\bf 1)} If the graph $\Gi(\upvarphi)$ is equipped with ($\Di$/$\NN$), then
	$$H^{s_1+s_2}_{\Gi}(\upvarphi)=H_{\Gi}^{s_1}(\upvarphi)\cap H^{s_1+s_2}_p \ \ \ \text{for}\ \ \ s_1\in\N,\ 
s_2\in[0,1/2).$$
	
	\noindent
	{\bf 2)} If the graph $\Gi(\upvarphi)$ is equipped with ($\NN$), then
	$$H^{s_1+s_2}_{\Gi}(\upvarphi)=H_{\Gi}^{s_1}(\upvarphi)\cap H^{s_1+s_2}_{\NN\KK}(\upvarphi) \ \ \ \text{for}\ 
\ 
\ s_1\in 2\N\,\ s_2\in[0,3/2).$$
	
	\noindent
	{\bf 3)} If the graph $\Gi(\upvarphi)$ is equipped with ($\Di$), then
	$$H^{s_1+s_2+1}_{\Gi}(\upvarphi)=H_{\Gi}^{s_1+1}(\upvarphi)\cap H^{s_1+s_2+1}_{\NN\KK}(\upvarphi) \ \ \ 
\text{for}\ \ \ s_1\in 2\N,\ s_2\in[0,3/2).$$
\end{prop}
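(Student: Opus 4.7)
The guiding idea is to exploit the remark preceding Definition \ref{exact}: since $\|\psi\|_{(s)}\sim\||\widetilde A+c|^{s/2}\psi\|_{L^2_p}$ where $\widetilde A$ is the reduced Laplacian on $\Hi(\upvarphi)$ introduced in Remark \ref{subgraph}, the space $H^s_\Gi(\upvarphi)$ can be identified with the domain $D(|\widetilde A+c|^{s/2})$. The proposition then reduces to identifying these domains of fractional powers with concrete Sobolev spaces on the reduced graph $\Gi(\upvarphi)$. I would proceed in two steps: first establish the identification at integer orders through elliptic regularity on each edge, then propagate it to non-integer orders by complex interpolation.

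For the integer-order base case, fix $s_1$ of the prescribed parity. The inclusion $H^{s_1}_\Gi(\upvarphi)\subseteq H^{s_1}_p$ is a standard elliptic bootstrap: if $\psi=\sum_k a_k\ffi_k$ with $\{a_k\}\in h^{s_1}$, then iterating the eigenvalue equation $-\Delta\ffi_k=\mu_k\ffi_k$ together with $\mu_k\sim k^2$ yields convergence of the partial sums in $H^{s_1}_p$, whose limit inherits the vertex conditions of the $\ffi_k$. The converse inclusion comes from expanding any $\psi\in H^{s_1}_p\cap\Hi(\upvarphi)$ satisfying the relevant conditions on $\upvarphi$ and checking via integration by parts that its Fourier coefficients decay like $k^{-s_1}$. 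The parity of $s_1$ enters here because $\widetilde A^{s_1/2}$ produces conditions only on even-order derivatives (when $s_1$ is even) or on odd-order derivatives of Dirichlet type (when $s_1$ is odd), which is precisely why case {\bf 2)} needs $s_1\in 2\N$ while case {\bf 3)} requires the shifted exponent $s_1+1$; case {\bf 1)} has no such restriction, because the fractional gap $s_2<1/2$ will never activate any boundary condition.

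To lift these identifications to non-integer orders I would invoke the Heinz--Lions complex interpolation theorem applied to the non-negative self-adjoint operator $|\widetilde A+c|$, which gives
\begin{equation*}
 D(|\widetilde A+c|^{(s_1+s_2)/2})=\bigl[D(|\widetilde A+c|^{s_1/2}),\,D(|\widetilde A+c|^{(s_1+m)/2})\bigr]_{s_2/m}
\end{equation*}
with $m=1$ in case {\bf 1)} and $m=2$ in cases {\bf 2)} and {\bf 3)}. On the Sobolev side, classical interpolation yields $[H^{s_1}_p,\,H^{s_1+1}_p]_{s_2}=H^{s_1+s_2}_p$ for $s_2<1/2$ (below the trace threshold), and $[H^{s_1}_{\NN\KK}(\upvarphi),\,H^{s_1+2}_{\NN\KK}(\upvarphi)]_{s_2/2}=H^{s_1+s_2}_{\NN\KK}(\upvarphi)$ for $s_2<3/2$, since no new derivative condition of order $s_1+1$ can yet be imposed in that range. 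Combining the two interpolation chains with the integer-order identification and intersecting with $H^{s_1}_\Gi(\upvarphi)$ (respectively $H^{s_1+1}_\Gi(\upvarphi)$ in case {\bf 3)}) produces the three claimed equalities.

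The main technical obstacle will be the careful tracking of which boundary conditions are active in each $D(\widetilde A^{r/2})$: one needs to verify that the conditions defining $D(|\widetilde A+c|^{(s_1+m)/2})$ agree, after complex interpolation, with those defining $H^{s_1+s_2}_{\NN\KK}(\upvarphi)$ (or with no condition at all in case {\bf 1)}), so that both interpolation scales produce the same intersection. This ultimately reduces to the classical fact that Dirichlet and Neumann traces of $H^s$ functions are well defined only for $s>1/2$ and $s>3/2$ respectively, which pinpoints the thresholds $1/2$ and $3/2$ appearing in the statement. A secondary delicate point specific to case {\bf 3)} is to check that the Dirichlet conditions at $V_e$ are correctly transported through the shift from $s_1$ to $s_1+1$; this follows once one observes that odd powers of $\widetilde A$ intertwine Dirichlet and Neumann-Kirchhoff conditions in the precise way needed.
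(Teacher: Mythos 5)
Your outline reproduces, in spirit, the standard direct argument for identifying $D(|A+c|^{s/2})$ with Sobolev-type spaces on a \emph{compact} graph (integer-order identification plus interpolation below the trace thresholds $1/2$ and $3/2$). That is essentially the content of the result the paper \emph{cites}, namely \cite[Proposition~3.2]{mio3}. The paper's own proof of Proposition~\ref{bor} is different and shorter: it does not redo the interpolation analysis at all, but reduces the infinite periodic setting to the compact one. Concretely, it truncates each half-line $e_j$ at distance $L$ from its internal vertex to form a compact graph $\widetilde\Gi(\upvarphi)$ with Dirichlet conditions at the cut points, introduces for each infinite edge a ring $\widetilde e_j$ of length $L_j$ and Dirichlet/Neumann Laplacians on $[0,L]$, and decomposes every $\psi\in H^{s_1+s_2}_{\Gi}(\upvarphi)$ into a sum of components living in $H^{s_1+s_2}_{\widetilde\Gi(\upvarphi)}\times\prod_j(H^{s_1+s_2}_{\widetilde e_j}\times H^{s_1+s_2}_{e_j^\Di}\times H^{s_1+s_2}_{e_j^\NN})$, to each factor of which the compact-graph interpolation result applies.

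This points to the genuine gap in your proposal: you never engage with the feature that makes this proposition nontrivial relative to the compact case, namely the half-lines carrying periodicity constraints. The spaces $H^s_p$ on an infinite edge are periodic $H^s_{loc}$ functions with an $L^2$ pairing taken over a single period, and a function in $\Hi(\upvarphi)$ must simultaneously be smooth across the points $2\pi kL_j$ (interior points of the edge) and satisfy vertex conditions at $x=0$, which one period-translate identifies with those interior points. The ``classical interpolation'' identities you invoke, such as $[H^{s_1}_p,H^{s_1+1}_p]_{s_2}=H^{s_1+s_2}_p$, are therefore not off-the-shelf statements in this setting; justifying them is exactly what the paper's truncation-plus-ring decomposition accomplishes, and your sketch provides no substitute for it. A second, more classical gap: from the two interpolation chains $D(|\widetilde A+c|^{(s_1+s_2)/2})=[D(\cdot^{s_1/2}),D(\cdot^{(s_1+m)/2})]_{s_2/m}$ and $[H^{s_1},H^{s_1+m}]_{s_2/m}=H^{s_1+s_2}$ one cannot formally conclude the claimed equality with the \emph{intersection} $H^{s_1}_{\Gi}(\upvarphi)\cap H^{s_1+s_2}$; interpolation does not commute with intersections, and the hard inclusion (the intersection embeds into the domain of the fractional power) requires a direct Fourier-coefficient/integration-by-parts argument that your plan only alludes to as a ``technical obstacle.'' As written, the proposal would need both of these points repaired before it constitutes a proof.
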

\begin{proof} Let us start by considering the first point of the statement. We denote by $\{e_j\}_{j\leq N_1}$ the 
finite edges composing $\Gi(\upvarphi)$, while
$\{e_j\}_{N_1+1\leq j\leq N_1+\widetilde N_1}$ are its infinite edges corresponding to the periods 
$\{L_j\}_{N_1+1\leq j\leq N_1+\widetilde N_1}$. We define a compact graph $\widetilde \Gi(\upvarphi)$ from 
$\Gi(\upvarphi)$ as follows (see Figure \ref{parametrizzazionetadpole2} for further details). For every $N_1+1\leq 
j\leq 
N_1+\widetilde N_1$, we cut the edge $e_j$ at distance $L_j$ from the internal vertex of $\Gi(\upvarphi)$ where 
$e_j$ is connected. As $\widetilde \Gi(\upvarphi)$ is a compact graph, the space
$L^2_p(\widetilde \Gi(\upvarphi),\C)$ corresponds to $L^2(\widetilde \Gi(\upvarphi),\C)$. There, we consider 
a 
self-adjoint Laplacian $\widetilde A$ being 
defined 
as follows. Every internal vertex of $\widetilde \Gi(\upvarphi)$ is equipped with Neumann-Kirchhoff boundary 
conditions. Every external vertex of $\widetilde \Gi(\upvarphi)$ belonging to $V_e(\upvarphi)$ is equipped with 
the 
same boundary conditions of $ \Gi(\upvarphi)$, while every other external vertex is equipped with 
($\Di$). 
Finally, we denote by $H^s_{\widetilde \Gi(\upvarphi)}:=D(|\widetilde A|^\frac{s}{2})$ for every $s>0$. 
\begin{figure}[H]
	\centering
	\includegraphics[width=\textwidth-100pt]{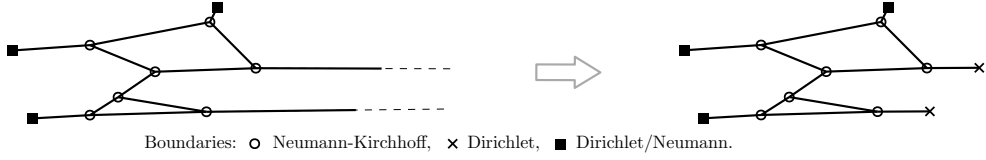}
	\caption{The figure represents an example of definition of the compact graph $\widetilde \Gi(\upvarphi)$ (on 
the right) from a specific infinite graph 
$\Gi(\upvarphi)$ (on the left) composed by $ N_1=11$ finite edges and $\widetilde N_1=2$ infinite edges. We also 
underline 
the boundary conditions characterizing $D(\widetilde A)$ in $\widetilde 
\Gi(\upvarphi)$.}\label{parametrizzazionetadpole2}
\end{figure}

\noindent
Afterwards, for 
every edge 
$e_j$ with $N_1+1\leq j\leq N_1+\widetilde N_1$, we define a ring $\widetilde e_j$ having length $L_j$. We 
consider 
on $L^2(\widetilde e_j,\C)$ a self-adjoint Laplacian $A_j$ with domain $D(A_j)=H^2(\widetilde e_j,\C)$ and we 
denote by $H^s_{\widetilde e_j}:=D(|A_j|^\frac{s}{2})$ for every $s>0.$ On $L^2((0,L_j),\C)$ with $N_1+1\leq j\leq 
N_1+\widetilde N_1$, we consider a 
Dirichlet 
Laplacian $A^\Di_j$ and Neumann Laplacian $A^\NN_j$, while we call, for every $s>0$, 
$$H^s_{e_j^\Di}:=D(|A^\Di_j|^\frac{s}{2}),\ \ \ \ \ \ \  \ \ \ \ \ H^s_{e_j^\NN}:=D(|A^\NN_j|^\frac{s}{2}).$$ 
Now, for every $\psi=(\psi^1,...,\psi^{N_1+\widetilde N_1})\in H^{s_1+s_2}_{\Gi}(\upvarphi)$ with $s_1\in\N$ and 
$s_2\in[0,1/2)$, there exist \begin{equation*}\begin{split}
\begin{cases}\psi_1=(\psi_1^1,...,\psi_1^{N_1+\widetilde N_1})\in H^{s_1+s_2}_{\widetilde 
\Gi(\upvarphi)},   \ \ \ \ \ \  \ \\
f^j\in H^{s_1+s_2}_{\widetilde e_j},\ \ \ &\forall N_1+1\leq j\leq N_1+\widetilde N_1,\\ 
g^j\in 
H^{s_1+s_2}_{e_j^\Di},\ \ \ &\forall N_1+1\leq j\leq N_1+\widetilde N_1,\\
h^j\in H^{s_1+s_2}_{ e_j^\NN},\ \ \ &\forall N_1+1\leq 
j\leq N_1+\widetilde N_1,\end{cases}
\end{split}
\end{equation*}
\begin{equation*}\begin{split}
\text{such that} \ \ \begin{cases}
\psi^j\equiv\psi^j_1,\ \ \ \ \  &\forall 1\leq j\leq N_1,\\
\psi^j(x)=\psi^j_1(x)+g^j(x)+h^j(x),\ \ \ \ &\forall x\in (0,L_j),\ \ \ N_1+1\leq j\leq N_1+\widetilde N_1,\\
\psi^j(x)=f^j\big(x-\big[\frac{x}{L_j
}\big]\big),\ \ \ \ &\forall x\in (L_j,+\infty),\ \ \ N_1+1\leq j\leq N_1+\widetilde N_1.\\
\end{cases}
\end{split}
\end{equation*}
The last decomposition yields that $H^{s_1+s_2}_{\Gi}(\upvarphi)$ can be identified with a suitable subspace of 
$$H^{s_1+s_2}_{\widetilde \Gi(\upvarphi)}\times\prod_{j=N_1+1}^{N_1+\widetilde N_1} \big(H^{s_1+s_2}_{\widetilde 
e_j}\times H^{s_1+s_2}_{e_j^\Di}\times H^{s_1+s_2}_{e_j^\NN}\big) .$$ 
Thanks to the first point of \cite[Proposition\ 4.2]{mio3}, we have
\begin{equation*}\begin{split}
\begin{cases}
H^{s_1+s_2}_{\widetilde \Gi(\upvarphi)}=H^{s_1}_{\widetilde \Gi(\upvarphi)}\cap H^{s_1+s_2}(\widetilde 
\Gi(\upvarphi),\C),\\
H^{s_1+s_2}_{\widetilde e_j}=H^{s_1}_{\widetilde e_j}\cap H^{s_1+s_2}(\widetilde e_j,\C),\ \ \ \ \ \  \ &\forall 
N_1+1\leq 
j\leq N_1+\widetilde N_1,\\
H^{s_1+s_2}_{e_j^\Di}=H^{s_1}_{e_j^{\Di}}\cap H^{s_1+s_2}((0,L_j),\C),\ \ \ \ \ \  \ &\forall N_1+1\leq j\leq 
N_1+\widetilde N_1,\\
H^{s_1+s_2}_{e_j^\NN}=H^{s_1}_{e_j^{\NN}}\cap H^{s_1+s_2}((0,L_j),\C),\ \ \ \ \ \  \ &\forall N_1+1\leq j\leq 
N_1+\widetilde N_1.\\
\end{cases}
\end{split}
\end{equation*}
The last relations imply that, for every $\psi\in H^{s_1+s_2}_{\Gi}(\upvarphi)$ with $s_1\in\N$ and 
$s_2\in[0,1/2)$, there holds $\psi\in H^{s_1}_{\Gi}(\upvarphi)\cap H^{s_1+s_2}_p$ achieving the 
proof of the first point of the proposition. The second and the third statement follow from the same 
techniques by respectively using the second and third point of \cite[Proposition\ 4.2]{mio3}.\qedhere
\end{proof}

In the following theorem, we collect the well-posedness and the controllability result for the bilinear 
Schr\"odinger equation in this general framework. The well-posedness is proved exactly as \cite[Proposition\ 
3.3]{mio5} 
by using Proposition \ref{bor} instead of \cite[Proposition\ 3.2]{mio5}. The controllability result subsequently 
follows from 
the same arguments of \cite[Theorem\ 3.6]{mio5} by considering Proposition \ref{approx} instead of 
\cite[Proposition\ 
B.2]{mio5}.

\begin{teorema}\label{generale} Let 
$\upvarphi:=\{\ffi_k\}_{k\in\N^*}$ be an orthonormal system of $L^2_p$ made by some eigenfunctions of $A$ and 
let 
$\{\mu_{k}\}_{k\in\N^*}$ be the corresponding eigenvalues.

	\smallskip
	\noindent
{\bf 1)} Let the couple $(A,B)$ satisfy Assumptions II$(\upvarphi,\eta,\tilde d)$ with $\eta>0$ and $\tilde d\geq 
0$. Let $d$ be introduced in Assumptions II and $\mu_k\sim k^2$. For every $\psi_0\in H^{2+d}_{\Gi}(\upvarphi)$ 
and 
$u\in L^2((0,T),\R)$ with $T>0$. There
	exists a unique mild solution $\psi\in C_0([0,T],H^{s}_{\Gi}(\upvarphi))$ of the \eqref{mainx1}. In addition, 
the flow of \eqref{mainx1} on $\Hi(\upvarphi)$ can be extended to a unitary flow $\G_t^{u}$ with respect to the 
$L^2_p-$norm such 
that $\G_t^{u}\psi_0=\psi(t)$ for any solution $\psi$ of \eqref{mainx1} with initial data $\psi_0\in 
\Hi(\upvarphi)$.

	\smallskip
	\noindent
	{\bf 2)} If there exist $C>0$ and $\tilde d\geq 0$ such that
$$ |\mu_{k+1}-\mu_k|\geq C  k^{-{\tilde d}},\ \ \ \ \ \ \ \ \ \ \ \ \forall k\in\N^*$$
	 and if $(A,B)$ satisfies Assumptions I$(\upvarphi,\eta)$ and Assumptions II$(\upvarphi,\eta,\tilde d)$ for 
$\eta>0$, then the \eqref{mainx1} is globally exactly controllable in $H^{s}_{\Gi}(\upvarphi)$ for $s=2+d$ with 
$d$ from Assumptions II.
\end{teorema}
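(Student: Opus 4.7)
The plan is to mirror the two-step schemes already executed in Proposition \ref{lauraT}, Theorem \ref{globalegirino}, Proposition \ref{lauraS} and Theorem \ref{globalestella}, with the interpolation identities of Proposition \ref{bor} replacing the ad hoc Sobolev decompositions used there. Throughout, I write $s=2+d$ for the target regularity, where $d$ is the exponent produced by Assumptions II according to which of the three cases (\textbf{1}), (\textbf{2}), (\textbf{3}) we are in.

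For the \textbf{well-posedness part 1)}, I would first observe that since $B$ preserves $\Hi(\upvarphi)$ and is bounded symmetric there, while $A$ restricted to $\Hi(\upvarphi)$ is self-adjoint, Ball--Marsden--Slemrod \cite[Theorem 2.5]{ball} yields a unique $C^0([0,T],\Hi(\upvarphi))$ solution and the conservation-of-norm argument from the proof of Proposition \ref{lauraT} (regularise $u$, differentiate $\|\psi\|_{L^2_p}^2$, close by density) promotes the flow to a unitary propagator $\G_T^u$ on $\Hi(\upvarphi)$. The real work is the $H^{2+d}_\Gi(\upvarphi)$-regularity of the Duhamel term $G(t)=\int_0^t e^{i\Delta(t-s)}u(s)B\psi(s)\,ds$. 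Setting $f(s)=u(s)B\psi(s)$, I would expand $\|G(t)\|_{(2+d)}^2$ on the basis $\upvarphi$ and integrate by parts in space twice against $\ffi_k$; each integration produces a factor $\mu_k^{-1}\sim k^{-2}$ and boundary terms supported at the vertices of $\Gi(\upvarphi)$. Assumptions II are exactly tailored so that $Bf\in H^{2}_\Gi(\upvarphi)$ (which kills the first round of boundary terms in the Neumann--Kirchhoff / Dirichlet / Neumann cases respectively), while $Bf\in H^{2+d}$ together with the appropriate $H^{1+d}_{\NN\KK}$ or $H^{d_1}_{\NN\KK}$ membership ensures that the residual boundary terms of the second integration by parts can be controlled by an $\ell^2$-trace norm. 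At this point, Proposition \ref{bor} is invoked to identify $H^{2+d}_\Gi(\upvarphi)$ with $H^{2}_\Gi(\upvarphi)\cap H^{2+d}$ (or the Neumann--Kirchhoff variant), so that the remaining integral against $\dd_x^{2+d}f$ gives exactly the Sobolev norm $\|f\|_{L^2((0,t),H^{2+d})}$. The inequality $\|G(t)\|_{(2+d)}\leq C(t)\|f\|_{L^2((0,t),H^{2+d})}$ then follows from \cite[Proposition B.6]{mio3} applied to the boundary traces, exactly as in step \textbf{2)} of the proof of Proposition \ref{lauraT}. Combined with $B\in \LL(H^{2+d}_\Gi(\upvarphi),H^{2+d})$, a Banach fixed point argument on sufficiently small time slices closes \textbf{1)}.

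For the \textbf{controllability part 2)}, I would proceed in three sub-steps, just as in Theorems \ref{globalegirino} and \ref{globalestella}. \emph{Local exact controllability} in a neighbourhood of $\ffi_1$ in $H^{2+d}_\Gi(\upvarphi)$: define the end-point map $\alpha:u\mapsto\{\la\ffi_k(T),\G_T^u\ffi_1\ra\}_k$ into the $h^{2+d}$-sphere $Q$ and compute its Fr\'echet derivative at $0$, obtaining $\gamma_k(v)=-iB_{k,1}\int_0^T v(\tau)e^{i(\mu_k-\mu_1)\tau}d\tau$. Surjectivity of $\gamma$ onto $T_{\updelta}Q$ reduces to a trigonometric moment problem, which is solvable by \cite[Proposition B.5]{mio3} thanks to Assumptions I(1) (which gives $\{x_kB_{k,1}^{-1}\}\in\ell^2$ since $|B_{k,1}|\gtrsim k^{-2-\eta}$ against $\{x_k\}\in h^{2+d}$ with $d\geq \eta+\tilde d\geq \eta$) together with the polynomial gap $|\mu_{k+1}-\mu_k|\geq Ck^{-\tilde d}$ and the non-resonance condition of Assumptions I(2), which rules out pathological coincidences of spectral gaps. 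The Generalized Inverse Function Theorem \cite[p.~240]{Inv} then yields local surjectivity of $\alpha$. \emph{Global controllability} follows by steering an arbitrary pair $\psi_1,\psi_2\in H^{2+d}_\Gi(\upvarphi)$ of equal $L^2_p$-norm $p$ into the neighbourhood $O_{\epsilon,T}^{2+d}$ of $p\,\ffi_1$; this is the role of the approximation statement Proposition \ref{approx} mentioned in the excerpt, which replaces \cite[Theorem B.2]{mio5}. Concatenating the two controls with the locally exact ones produces a control driving $\psi_1$ to $\psi_2$. \emph{Energetic controllability} is then immediate from $\ffi_k\in H^{2+d}_\Gi(\upvarphi)$.

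The step I expect to be the main obstacle is the integration-by-parts estimate for $G(t)$ in the general graph setting: unlike the tadpole or star cases, the vertex set of $\Gi(\upvarphi)$ can be complicated, and one must verify that the boundary contributions of each integration by parts really do cancel or collapse into an $\ell^2$-trace that \cite[Proposition B.6]{mio3} can absorb. This is precisely where the three alternatives in Assumptions II encode the compatibility between the range of $B$, the boundary conditions ($\Di$, $\NN$, or the mixed type) characterising $A$, and the interpolation identities of Proposition \ref{bor}. Once these three ingredients are reconciled, the remainder of the argument is a direct transcription of the tadpole and star proofs.
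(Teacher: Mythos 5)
Your proposal is correct and follows essentially the same route as the paper, which proves this theorem only by reference: the well-posedness ``exactly as [mio5, Theorem 3.3] by using Proposition \ref{bor} instead of [mio5, Proposition 3.2]'' and the controllability ``from the same arguments of [mio5, Theorem 3.6] by considering Proposition \ref{approx} instead of [mio5, Theorem B.2]''. Your expansion --- Ball--Marsden--Slemrod plus the integration-by-parts/Duhamel estimate and fixed point for part 1), then local surjectivity via the moment problem and \cite[Proposition B.5]{mio3} combined with the global approximate controllability of Proposition \ref{approx} for part 2) --- is precisely the argument those citations point to, as instantiated in Propositions \ref{lauraT}, \ref{lauraS} and Theorems \ref{globalegirino}, \ref{globalestella}.
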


\smallskip
\noindent
{\bf Acknowledgments.} 
The second author was financially supported by the ISDEEC project by ANR-16-CE40-0013.

\appendix\section{Global approximate controllability }
Let us denote by $U(\Hi)$ the space of the unitary operators on a Hilbert space $\Hi.$
\begin{defi}
Let 
$\upvarphi:=\{\ffi_k\}_{k\in\N^*}$ be an orthonormal system of $L^2_p$ made by some eigenfunctions of $A$.	The 
\eqref{mainx1} is said to be globally approximately controllable in $H_{\Gi}^{s}(\upvarphi)$ with $s>0$ 
if the following assertion is verified. For every $\epsilon>0$, $\psi\in H^{s}_{\Gi}(\upvarphi)$ and
$\widehat\G\in U(\Hi(\upvarphi))$ such that 
$\widehat\G\psi\in H^{s}_{\Gi}(\upvarphi)$, there exist $T>0$ and 
$u\in L^2((0,T),\R)$ such that
	$$\|\widehat\G\psi-\G^u_T\psi\|_{(s)}<\epsilon.$$
\end{defi}
\begin{prop}\label{approx}
Let 
$\upvarphi:=\{\ffi_k\}_{k\in\N^*}$ be an orthonormal system of $L^2_p$ made by some eigenfunctions of $A$. If the 
hypotheses of Theorem \ref{generale} are satisfied, then the \eqref{mainx1} is globally approximately 
controllable in $H^{s}_{\Gi}(\upvarphi)$ for $s=2+d$ with $d$ from Assumptions II.
\end{prop}
\begin{proof}
The proof is the same of \cite[Proposition\ B.2]{mio5}.\qedhere\end{proof}
\begin{osss}\label{approxT}
Let us consider the framework introduced in Section \ref{sec2} with $\Ti$ an infinite tadpole graph.
	As Proposition $\ref{approx}$, the problem (\ref{mainT}) is globally approximately controllable in 
$H_{\Ti}^{4}(\upvarphi)$ when the hypotheses of Theorem $\ref{globalegirino}$ are verified. Indeed, for every 
$(j,k),(l,m)\in \{(j,k)\in(\N^*)^2:j< k\}$ so that $(j,k)\neq(l,m)$ and such that 
$\mu_j-\mu_k-\mu_j+\mu_m={\pi^2}(j^2-k^2-l^2+m^2)=0,$ there exists $C>0$ such that
	\begin{equation*}\begin{split}
	&\la 
\ffi_j,B\ffi_j\ra_{L^2_p}-\la\ffi_k,B\ffi_k\ra_{L^2_p}-\la\ffi_l,B\ffi_l\ra_{L^2_p}+\la\ffi_m,B\ffi_m\ra_{L^2_p}
=C(j^{-4}-k^{-4}-l^{-4}+m^{4}) \neq 0.
	\end{split}\end{equation*}
	Finally, the arguments leading to Proposition $\ref{approx}$ also ensure the claim.
\end{osss}
\begin{osss}\label{approxS}
Let us consider the framework introduced in Section \ref{sec3} with $\Si$ a star graph composed by a finite 
number of edges of finite or infinite length.
	Equivalently to Remark \ref{approxT}, the (\ref{mainS}) is globally approximately controllable in 
$H_{\Si}^{3}(\upvarphi)$ when the hypotheses of Theorem \ref{globalestella} are verified. Indeed, for every 
$(j,k),(l,m)\in \{(j,k)\in(\N^*)^2:j< k\}$ so that $(j,k)\neq(l,m)$ and such that $\mu_j-\mu_k-\mu_j+\mu_m=0,$ we 
have
	\begin{equation*}\begin{split}
	&\la 
\ffi_j,B\ffi_j\ra_{L^2_p}-\la\ffi_k,B\ffi_k\ra_{L^2_p}-\la\ffi_j,B\ffi_j\ra_{L^2_p}+\la\ffi_m,B\ffi_m\ra_{L^2_p}
\neq0.
	\end{split}\end{equation*}
\end{osss}

\smallskip
\noindent
{\bf Data availability.} Data sharing is not applicable to this article as no new data were created or analyzed in 
this study.

\end{document}